\newtheorem{theorem}{Theorem}[section]
\newtheorem{corollary}[theorem]{Corollary}
\newtheorem{proposition}[theorem]{Proposition}
\theoremstyle{definition}
\newtheorem{definition}[theorem]{Definition}
\newtheorem{example}[theorem]{Example}
\newtheorem{remark}[theorem]{Remark}
\numberwithin{equation}{section}
\title[On finite-dimensional set-inclusive constraint systems]{On finite-dimensional set-inclusive
constraint systems: local analysis and related optimality conditions}
\author[A. Uderzo]{Amos Uderzo}
\address[A. Uderzo]{Department of Mathematics and Applications,
University of Milano - Bicocca, Milano, Italy}
\email{{\tt amos.uderzo@unimib.it}}
\keywords{Generalized equation, metric $C$-increase, prederivative,
fan, contingent cone, error bound, optimality condition}
\subjclass[2010]{49J52, 49J53, 90C30}
\date{\today}
\newcommand{\R}{\mathbb R}
\newcommand{\N}{\mathbb N}
\newcommand{\Uball}{{\mathbb B}}
\newcommand{\Usfer}{{\mathbb S}}
\newcommand{\dom}{{\rm dom}\, }
\newcommand{\grph}{{\rm gph}\,}
\newcommand{\upinv}{{+1}}
\newcommand{\nullv}{\mathbf{0}}
\newcommand{\conv}{{\rm co}\, }
\newcommand{\cl}{{\rm cl}\, }
\newcommand{\bd}{{\rm bd}\, }
\newcommand{\inte}{{\rm int}\, }
\newcommand{\Lin}{\mathcal{L}}
\newcommand{\IGE}{{\rm IGE}\,}
\newcommand{\OP}{{\rm P}}
\newcommand{\FlD}{\widehat{\partial}}
\newcommand{\FuD}{\widehat{\partial}^+}
\newcommand{\Solv}[1]{{\mathcal Sol}(#1)}
\newcommand{\gincr}[1]{{\rm inc}_C(#1)}
\newcommand{\Eff}[1]{{\rm Eff}_C(#1)}
\newcommand{\Lev}[1]{{\rm Lev}_{{}_{#1}}}
\newcommand{\sur}[1]{{\rm sur}\, ({#1})}
\newcommand{\ndc}[1]{{#1}^{{}^\ominus}}
\newcommand{\ball}[2]{{\rm B}(#1, #2)}
\newcommand{\dist}[2]{{\rm dist}\left(#1,#2\right)}
\newcommand{\exc}[2]{{\rm exc}\left(#1,#2\right)}
\newcommand{\Tang}[2]{{\rm T}(#1;#2)} 
\newcommand{\Iang}[2]{{\rm I}(#1;#2)} 
\newcommand{\WIang}[2]{{\rm I_w}(#1;#2)} 
\newcommand{\Haus}[2]{{\rm Haus}(#1,#2)}
\newcommand{\Ncone}[2]{{\rm N}(#1;#2)}  
\newcommand{\incr}[3]{{\rm inc}_C(#1;#2;#3)}
\begin{document}

\begin{abstract}
In the present paper, some aspects of the finite-dimensional theory of set-inclusive generalized
equations are studied. Set-inclusive generalized equations are problems arising in
several contexts of optimization and variational analysis, involving multi-valued
mappings and cones. The aim of this paper is to propose an approach to the local
analysis of the solution set to such kind of generalized equations.
In particular, a study of the contingent cone to the solution set is carry
out by means of first-order approximations of set-valued mappings, which are expressed
by prederivatives. Such an approach emphasizes the role of the metric increase property
for set-valued mappings, as a condition triggering crucial error bound estimates
for the tangential description of solution sets. Some of the results obtained through this
kind of analysis are then exploited for formulating necessary optimality conditions,
which are valid for problems with constraints formalized by set-inclusive generalized
equations.
\end{abstract}

\maketitle


\begin{flushright}
{\small
 ``Cum autem universa mathesis in investigatione quantitatum incognitarum
versetur \dots", \\
L. Euler, {\it Commentatio de Matheseos sublimioris utilitate},
E. 790, O. III, 2.}
\end{flushright}


\section{Introduction} \label{Sect1}

The starting point of several investigation trends in applied nonlinear
analysis seems to be an apparent gnosiological dichotomy: from one hand,
phenomena to be studied are quantitatively described by various types
of equations; on the other hand, equations often happen to be hardly
solvable. In the case in which one solution is actually at disposal, it becomes
crucial to glean information on the geometry of the solution set
near that solution. This is true, in particular, when equations
describe the constraint system of an optimization problem.
In that context, indeed, the Lagrange's intuition, in order to
extend to constrained problems the seminal tangent approach for
searching extrema due to Fermat, essentially was the fact that an approximated
representation of the local geometry of the solution set to an equation system
(feasible region of the problem)
already suffices to trigger effective solving methods.
In a classic optimization setting, the basic first-order approximation of
the solution set to a system of smooth equations is the tangent
space to a differentiable manifold. Historically, the question of computing such a
tangent space has been successfully addressed, upon regularity
condition, even in the general setting of Banach spaces by the
Lyusternik's theorem (see \cite{Lyus34}): in fact, that result reduces a
nontrivial geometric issue to a linear algebra computation
(namely, to solve a linear system).
Later, the appearance of inequality or more complicate side-constraint
systems, which, in the Rockafellar's words `are the true hallmark of modern
optimization' (see \cite{Rock93}), led to replace the tangent
space with other one-side first-order approximations of sets and to
look for adequate algebraic representations of them, in the Lagrange
spirit. An important and well-known instance in such a trend of research is the
employment of the contingent (a.k.a. Bouligand) cone, which,
among many other local approximation of sets, emerged as a key
tool in modern approaches to the analysis of optimization problems.

The main purpose of the present paper is to start a local study of the
solution set to a certain class of generalized equations called
set-inclusive, by providing a description of its contingent cone.
After \cite{Robi79}, generalized equation is a term largely employed
in the variational analysis literature to denote a problem container,
which includes traditional equality/inequality systems, variational
inequalities and complementarity problems, fixed point and other
equilibrium conditions. In constrained optimization, constraint
systems formalized by generalized equations enable to deal with
a broad spectrum of problems.
Given a set-valued mapping $F:\R^n\rightrightarrows\R^m$,
a nonempty closed set $S\subseteq\R^n$, and a closed, convex and pointed
cone $C\subseteq\R^m$, by set-inclusive generalized equation the
problem:
$$
  \hbox{find $x\in S$ such that}\ F(x)\subseteq C,  \leqno (\IGE)
$$
is meant. The solution set to $(\IGE)$ will be denoted throughout
the paper by
$$
  \Solv{\IGE}=\{x\in S:\ F(x)\subseteq C\}.
$$
Such a class of generalized equation seems to have a very different
nature with respect to generalized equations mostly studied in
variational analysis literature, which can be put in the form
\begin{equation}     \label{in:classicge}
  g(x)\in G(x),
\end{equation}
where $g$ stands for a single-valued mapping (sometimes called base), whereas
$G$ stands for a multi-valued mapping (sometimes called field)
\footnote{In a normed space setting, with $\nullv$ denoting the null element,
$(\ref{in:classicge})$ is usually reformulated as $\nullv\in-g(x)+G(x)$.}.
Within constraint system analysis, a motivation for considering generalized
equations in the $(\IGE)$ form is discussed below.

\begin{example}[Robust approach to uncertain constraint systems]
Let us consider a cone constraint system formalized by the parametric
inclusion
\begin{equation}     \label{in:parcontsys}
  f(x,\omega)\in C,
\end{equation}
where $f:\R^n\times\Omega\longrightarrow\R^m$ is a given mapping
and $C$ is a closed, convex cone. The parameter $\omega\in\Omega$
entering the argument of $f$ describes uncertainties often occurring
in real-world optimization problems. The feasible region of such problems,
as well as their objective function, may happen to be affected by
computational and estimation errors and conditioned by unforeseeable
future events. Whereas in stochastic optimization the probability
distribution of the uncertain parameter appears among the problem
data, robust optimization assumes that no stochastic information on
the uncertain parameter is at disposal. This opens the question
on what can be considered as a solution to $(\ref{in:parcontsys})$.
An approach consists in considering as a feasible solution all
elements $x\in\R^n$ which remain feasible in every possibly occurring
scenario, that is such that
$$
   f(x,\omega)\in C,\quad\forall \omega\in\Omega.
$$
Such an approach naturally leads to introduce the robust constraining
mapping $F:\R^n\rightrightarrows\R^m$, defined as
\begin{equation}     \label{eq:robconstmap}
  F(x)=f(x,\Omega)=\{f(x,\omega):\ \omega\in\Omega\},
\end{equation}
and to consider set-inclusive generalized equations like $(\IGE)$.
\end{example}

Mappings defined as in $(\ref{eq:robconstmap})$ or, more in general,
as
$$
  F(x)=f(x,U(x))=\{f(x,y):\ y\in U(x)\},
$$
where $U:\R^n\rightrightarrows\R^m$ is a given set-valued mapping,
are called parameterized mappings after \cite{AubFra90} and are employed
in optimal control theory.

For the local study of the solution set to generalized equations as
in $(\ref{in:classicge})$, a well-developed
approach consists in replacing the nonlinear (sometimes even nonsmooth)
base term with its derivative or another kind of first-order approximation,
and then to employ stability results for getting insights into the
the solution set from the solution set of the approximated (linearized,
if possible) generalized equation.
When trying to adopt a similar approach in the case of $(\IGE)$, a
basic question to face is which approximation tool is to be used
for the term $F$.
Since the fact that $\bar x\in S$ is a solution to $(\IGE)$ involves
all elements in $F(\bar x)$, such an approximation tool should not be
based on the local behaviour of $F$ around a reference element of
its graph, but should take into account the whole set $F(\bar x)$.
For this reason, the present approach utilizes the notion of
prederivative in the sense of Ioffe (see \cite{Ioff81}). The splitting of this notion
in an outer and a inner version allows one to study separately
the question of inner and outer tangential approximation of the
solution set to a $(\IGE)$.
Another feature distinguishing the present approach is the essential
employment of the metric $C$-increase property for set-valued
mappings. This property describes a certain behaviour of
mappings that links the metric structure of the domain with the
shape of the cone $C$ appearing in $(\IGE)$. Roughly speaking,
it can be viewed as a counterpart, valid in partially ordered vector
spaces, of the so-called decrease principle for scalar functions in use
in variational analysis.
It is well known that for traditional equation systems and, to
a certain extent, for generalized equations in the form $(\ref{in:classicge})$,
open covering (and hence, metric regularity) is the main property
for mappings ensuring local solvability and, as such, it became
the key concept to achieve tangential approximations of solution sets.
In a similar manner, the metric $C$-increase property turns out
to be a key concept in order to establish a proper error bound for
$(\IGE)$ and, through such kind of estimate, to get the inner tangential
approximation of $\Solv{\IGE}$. The fundamental principle of
nonlinear analysis behind the error bound result is the Ekeland
variational principle.

The contents of the paper are arranged in the subsequent sections as
follows.
In Section \ref{Sect2}, all the analysis tools, which are needed to build
the approach here proposed and to carry out its application to constrained optimization,
are presented. Essentially, they are the semicontinuity properties of
the excess function associated with a given set-valued mapping, which
derive from corresponding semicontinuity properties for set-valued mappings;
the aforementioned metric $C$-increase property, in local or global form;
various notions of one-side tangent cones coming from nonsmooth analysis
and related properties; generalized differentiation tools for functions and
set-valued mappings, such as the Fr\'echet subdifferential, prederivatives
and fans.
Section \ref{Sect3} contains the main contribution of the paper, which
concerns the representation of tangential approximations of the solution set
to set-inclusive generalized equations. In the same section, a local
error bound for such kind of generalized equations is presented. Within
the present approach, it plays an instrumental role, but it could be
also of independent interest.
In Section \ref{Sect4}, optimization problems with constraints formalized
by set-inclusive generalized equations are considered and some of the
results achieved in Section \ref{Sect3} are exploited for establishing
necessary optimality conditions.

\vskip1cm


\section{Analysis tools} \label{Sect2}

The notation in use throughout the paper is standard. $\N$ and $\R$
denote the natural and the real number set, respectively. $\R^m_+$
denotes the nonnegative orthant in the Euclidean space $\R^m$,
whose (Euclidean) norm is indicated by $|\cdot|$. The null vector
in any Euclidean space is indicated by $\nullv$. Given an element $x$ of an
Euclidean space and a nonnegative real $r$, $\ball{x}{r}=\{z\in\R^n:\
|z-x|\le r\}$ denotes the closed ball with center $x\in\R^n$ and radius $r$.
Thus, $\Uball=\ball{\nullv}{1}$ and $\Usfer=\{v\in\Uball:\ |v|=1\}$ stand for
the unit ball and the unit sphere, respectively.
Given a subset $S$ of an Euclidean space, by $\inte S$ and $\bd S$ its topological
interior and boundary are denoted, respectively.
By $\dist{x}{S}=\inf_{z\in S}d(z,x)$ the distance of $x$ from a subset $S\subseteq
\R^n$ is denoted, with the convention that $\dist{x}{\varnothing}
=+\infty$. The $r$-enlargement of a set $S\subseteq\R^n$ is indicated by
$\ball{S}{r}=\{x\in\R^n:\ \dist{x}{S}\le r\}$. Given a pair of subsets
$S_1,\, S_2\subseteq X$, the symbol $\exc{S_1}{S_2}=\sup_{s\in S_1}
\dist{s}{S_2}$ denotes the excess of $S_1$ over $S_2$.
 Whenever $F:\R^n\rightrightarrows\R^m$ is a set-valued mapping, $\grph F$
and $\dom F$ denote the graph and the domain of $F$, respectively.
All the set-valued mappings appearing in the paper will be supposed
to take closed values, unless otherwise stated.
$\Lin(\R^n;\R^m)$ indicates the space of linear mappings acting from
$\R^n$ to $\R^m$, endowed with the operator norm $\|\cdot\|$.
Other notations will be explained contextually to their use.

Throughout the text, the acronyms l.s.c. and u.s.c. stand for
lower semicontinuous and upper semicontinuous, respectively.

\subsection{Elements of set-valued analysis}

In the following remark, several facts concerning subsets of $\R^m$, which
will be employed in the subsequent analysis, are collected.

\begin{remark}    \label{rem:vectprops}
(i) If $S\subseteq\R^m$ is convex, then for every $\alpha,\, \beta\ge 0$
one has $(\alpha+\beta)S=\alpha S+\beta S$.

(ii) If $S\subseteq\R^m$ is closed and $r>0$, then $\ball{S}{r}=
S+r\Uball$.

(iii) It is known (see \cite{PalUrb02}) that, according to the order
cancellation law, if $A$, $B$ and $C$ are convex compact subsets
of $\R^m$, then
$$
  A+B\subseteq B+C \qquad\Rightarrow\qquad A\subseteq C.
$$
Such a law can be readily extended to the case in which $C$ is only
closed and convex (possibly unbounded). Indeed, since $A+B$ is
compact, there exists $k>0$ such that
$$
  B\subseteq k\Uball \qquad\hbox{ and }\qquad
  A+B\subseteq k\Uball.
$$
Clearly, the above inclusions entail that
$$
  (B+C)\cap k\Uball\subseteq B+(C\cap 2k\Uball).
$$
This because if $x=b+c\in (B+C)\cap k\Uball$, with $b\in B$ and $c\in C$,
one has
$$
  |c|\le |b|+|b+c|\le k+k\le 2k.
$$
Therefore, it results in
$$
  A+B\subseteq (A+B)\cap k\Uball\subseteq (B+C)\cap k\Uball\subseteq
  B+(C\cap 2k\Uball).
$$
Since $A$, $B$ and $C\cap 2k\Uball$ are all convex compacts sets,
by the aforementioned order cancellation law one obtains
$$
  A\subseteq C\cap 2k\Uball\subseteq C.
$$

(iv) Let $S\subseteq\R^m$ be a closed set and $C\subseteq\R^m$ a closed,
convex cone such that for every $\epsilon>0$ it is $S\subseteq C+\epsilon\Uball$.
Then, the stronger inclusion $S\subseteq C$ actually holds.
\end{remark}

In studying the variational behaviour of set-valued mappings, a basic
tool of analysis is the excess of a set $A$ beyond a set $B$, with
$A,\, B\subseteq\R^m$, that is defined by
$$
  \exc{A}{B}=\sup_{a\in A}\dist{a}{B}=\sup_{a\in A}\inf_{b\in B}
  |a-b|.
$$

The next remark recalls some known facts concerning the behaviour
of the excess, that are needed in the subsequent section (for their
proof, see \cite{Uder18}).

\begin{remark}     \label{rem:excbehave}
(i) Let $S\subseteq\R^m$ be a nonempty set such that $\exc{S}{C}
>0$. Then, for any $r>0$ it holds $\exc{\ball{S}{r}}{C}=\exc{S}{C}+r$
(behaviour of the excess with respect to enlargements).

(ii) If $S\subseteq\R^m$, it holds $\exc{S+C}{C}=\exc{S}{C}$
(invariance of the excess under conic extension).

(iii) Let $r>0$. It holds $\exc{r\Uball}{C}=\sup_{x\in r\Uball}
\inf_{c\in C}|x-c|\le\sup_{x\in r\Uball}|x|=r$. This can not be viewed
as a special case of (i), because here $S=\{\nullv\}$ does not satisfy
$\exc{S}{C}>0$ (remember that the cone $C$ is closed).
\end{remark}

Let $F:\R^n\rightrightarrows\R^m$ be a set-valued mapping. Its domain
will be denoted by $\dom F=\{x\in\R^n:\ F(x)\ne\varnothing\}$.
Let $A\subseteq$ be a subset of $\R^m$. The upper inverse (also
called core, after \cite{AubFra90}) of $A$ through $F$ is denoted by $F^\upinv(A)=\{
x\in\R^n:\ F(x)\subseteq A\}$, whereas the lower inverse of $A$
through $F$ is denoted by $F^{-1}(A)=\{x\in\R^n:\ F(x)\cap A
\ne\varnothing\}$. Whenever $F$ is a single-valued mapping,
in which case $F^\upinv(A)=F^{-1}(A)$, the wide-spread notation
$F^{-1}(A)$ will be preferred. Recall that $F$ is said to be lower semicontinuous
(for short, l.s.c.) at $\bar x\in\R^n$ if for every open set
$O\subseteq\R^m$ such that $F(\bar x)\cap O\ne\varnothing$ there exists
$\delta_O>0$ such that
$$
  F(x)\cap O\ne\varnothing,\quad\forall x\in\ball{\bar x}{\delta_O}.
$$
$F$ is said to be Hausdorff $C$-upper semicontinuous
(for short, u.s.c.) at $\bar x\in\R^n$ if for every $\epsilon>0$
there exists $\delta_\epsilon>0$ such that
$$
  F(x)\subseteq F(\bar x)+C+\epsilon\Uball,\quad\forall
  x\in\ball{\bar x}{\delta_\epsilon}.
$$
$F$ is said to be Lipschitz with constant $\kappa\ge 0$ if
$$
  \Haus{F(x_1)}{F(x_2)}\le\kappa |x_1-x_2|,\quad\forall
  x_1,\, x_2\in\R^n.
$$

\begin{remark}     \label{rem:semexcfunct}
Given a set-valued mapping $F:\R^n\rightrightarrows\R^m$,
the following assertions linking the aforementioned semicontinuity
properties of $F$ with corresponding semicontinuity properties of
the excess function $\phi:\R^n\longrightarrow [0,+\infty]$, which
is associated with $F$ and $C$, namely
$$
  \phi(x)=\exc{F(x)}{C}
$$
will come into play:

(i) if $F$ is l.s.c. at $\bar x$, then $\phi$ is l.s.c. at $\bar x$;

(ii) if $F$ is Hausdorff $C$-u.s.c. at $\bar x$, then $\phi$ is u.s.c.
at $\bar x$.

\noindent Their proof can be found in \cite[Lemma 2.3]{Uder18}.
\end{remark}

\begin{remark}
(i) It is well known that the (Minkowski) sum of two closed sets
may fail to be a closed set, whereas the sum of a compact set
with a closed one remains closed. The former fact may cause a shortcoming
inasmuch as, given a set-valued mapping $F:\R^n\rightrightarrows\R^m$,
one would need to have $F(x)+C$ closed.
In this concern, it is worth observing that in many circumstances, even
if $F(x)$ is not compact, $F(x)+C$ still may preserve the property of being
closed. This happens, for instance, if there exists a compact set
$S_x\subseteq F(x)$ such that
$$
  S_x+C=F(x)+C.
$$
To consider such a circumstance, let us denote by $\Eff{S}$ the set
of all efficient elements of $S$ with respect to $C$, i.e.
$$
  \Eff{S}=\{x\in S:\ (x-C)\cap S=\{x\}\}.
$$
Recall that a subset $S\subseteq\R^m$ is said to enjoy the $C$-quasi domination
property provided that $\Eff{S}\ne\varnothing$ and $\Eff{S}+C=S+C$ (see, for
instance, \cite{JahHa11}).
Then, if $F$ takes values with such a property, and the set of efficient elements
of its values is compact, then $F(x)+C$ is closed.
Other sufficient conditions for $F(x)+C$ to be closed can be
formulated on the basis of specific properties of $F$.

(ii) Another known fact which is relevant to the present analysis
is that, if a set-valued mapping $F:\R^n
\rightrightarrows\R^m$ is l.s.c. at each point of $\R^n$, then
$F^\upinv(C)$ is a closed set, for every closed set $C$
(see, for instance, \cite[Lemma 17.5]{AliBor06}).
This fact makes it clear that, under the assumptions made
on the problem data of $(\IGE)$ (namely: $S$ closed and $C$
closed, convex and pointed cone), if $F$ is a l.s.c. set-valued
mapping, then the solution set $\Solv{\IGE}=S\cap F^\upinv(C)$
is a closed subset (possibly empty) of $\R^n$.
\end{remark}

\subsection{The metric $C$-increase property}

The next definition introduces the main property of set-valued mappings,
on which the proposed approach to the solution analysis of $(\IGE)$ relies.
It postulates a behaviour of mappings that links the metric structure
of the domain with the partial ordering induced on the range space
by the cone $C$ in the standard way (henceforth denoted by $\le_C$).

\begin{definition}[Metrically $C$-increasing mapping]   \label{def:metincr}
Let $S\subseteq\R^n$ be a nonempty closed set and let $C\subseteq\R^m$
be a closed, convex cone.
A set-valued mapping $F:\R^n\rightrightarrows\R^m$ is said to be:

\begin{itemize}

\item[(i)]
{\it metrically $C$-increasing} around $\bar x\in\dom F$
relative to $S$ if there exist $\delta>0$ and $\alpha>1$ such that
\begin{equation}    \label{in:defmetincr}
 \forall x\in\ball{\bar x}{\delta}\cap S,\, \forall r\in (0,\delta),\
 \exists z\in\ball{x}{r}\cap S:\ \ball{F(z)}{\alpha r}\subseteq
 \ball{F(x)+C}{r}.
\end{equation}
The quantity
$$
  \incr{F}{S}{\bar x}=\sup\{\alpha>1:\ \exists\delta>0 \hbox{ for which
  the inclusion in $(\ref{in:defmetincr})$ holds\,}\}
$$
is called {\it exact bound of metric $C$-increase} of $F$ around
$\bar x$, relative to $S$.

\item[(ii)]
{\it globally metrically $C$-increasing} if there exists
$\alpha>1$ such that
\begin{equation}    \label{in:defglometincr}
   \forall x\in\R^n,\ \forall r>0,\ \exists z\in\ball{x}{r}:\
   \ball{F(z)}{\alpha r}\subseteq  \ball{F(x)+C}{r}.
\end{equation}
The quantity
$$
  \gincr{F}=\sup\{\alpha>1: \hbox{ the inclusion in $(\ref{in:defglometincr})$
   holds\,}\}
$$
is called {\it global exact bound of metric $C$-increase} of $F$.
\end{itemize}
\end{definition}

As a comment to the above property, let us observe that the behaviour
that it describes can be regarded as a set-valued version of a
phenomenon for scalar functions, which is known in variational analysis
as decrease principle.
By this term, any condition is denoted, which ensures the existence
of a constant $\kappa>0$ such that
$$
  \inf_{x\in\ball{\bar x}{r}}\varphi(x)\le\varphi(\bar x)-\kappa r,
$$
where $\varphi:X\longrightarrow\R\cup\{+\infty\}$ is a l.s.c. and
bounded from below function defined on a proper (at least, metric)
space, $\bar x\in X$ is a reference point and $r>0$. Often, such a
condition finds a formulation in terms of Fr\'echet subdifferential
(see \cite[Theorem 3.6.2]{BorZhu05}), provided that $X$ is a Fr\'echet smooth
Banach space, or, more generally, in terms of strong slope (see \cite{AzCoLu02}),
if $X$ is a complete metric space. The decrease principle appeared
as a fundamental tool in the analysis of error bounds and solution
stability for inequalities and, as such, it plays a key role in
establishing implicit multifunction theorems (see \cite{BorZhu05}).

\begin{remark}    \label{rem:equivrefCincr}
(i) Whenever $\bar x\in\inte S$, the notion of metric $C$-increase around
$\bar x$, relative to $S$, reduces to the notion of local metric
$C$-increase around $\bar x$, as defined in \cite{Uder18}.

(ii) In the light of Remark \ref{rem:vectprops}(ii), an equivalent reformulation of
the inclusion $(\ref{in:defmetincr})$ that will be useful is
\begin{equation}    \label{def:metincrref}
 \forall x\in\ball{\bar x}{\delta}\cap S,\, \forall r\in (0,\delta),\
 \exists z\in\ball{x}{r}\cap S:\ F(z)+\alpha r\Uball\subseteq
 F(x)+C+r\Uball.
\end{equation}

(iii) Inclusions $(\ref{in:defmetincr})$, $(\ref{in:defglometincr})$ and
$(\ref{def:metincrref})$ make evident that, whenever $F$ is single-valued,
Definition \ref{def:metincr} would be never satisfied with the choice
$C=\{\nullv\}$. This entails that the approach here proposed can not
work when studying the special case of generalized equations $(\IGE)$
given by equality systems. Such a limit does not emerge in the theory
of tangential approximations of the solution set to generalized
equations in the form $(\ref{in:classicge})$: there, instead, the
equality system case inspired developments towards more general
types of equations, embedding them. On the other hand,
it must be observed that a format like $(\IGE)$ is conceived specifically
for multi-valued mappings $F$.
\end{remark}

\begin{example}     \label{ex:glometCincrmap}
Let $F:\R\rightrightarrows\R^2$ be defined by
$$
  F(x)=\{y=(y_1,y_2)\in\R^2:\ \min\{y_1,y_2\}=x\}
$$
and let $C=\R^2_+$. By a direct check of Definition \ref{def:metincr}(ii),
one can see that the set-valued mapping $F$ is globally metrically
$\R^2_+$-increasing, with $\gincr{F}=2$.
\end{example}

Further examples of classes of metrically $C$-increasing set-valued mappings,
along with verifiable conditions for detecting such property, will
be provided in the next subsection.


\subsection{Generalized differentiation tools}

Let $S\subseteq\R^n$ be a nonempty closed set and let $\bar x\in S$.
As a first-order approximation of sets the following cones will
be used:
$$
  \Tang{S}{\bar x}=\{v\in\R^n:\ \exists (v_n)_n \hbox{ with } v_n\to v,
  \ \exists (t_n)_n \hbox { with }t_n\downarrow 0:\
  \bar x+t_nv_n\in S,\ \forall n\in\N\},
$$
$$
  \Iang{S}{\bar x}=\{v\in\R^n:\ \exists\delta>0: \bar x+tv\in S,\
  \forall t\in (0,\delta)\}.
$$
and
$$
  \WIang{S}{\bar x}=\{v\in\R^n:\ \forall\epsilon>0,\ \exists
  t_\epsilon\in (0,\epsilon):\ \bar x+t_\epsilon v\in S\}.
$$
They are called the contingent cone, the feasible direction cone,
and the weak feasible direction cone to $S$ at $\bar x$, respectively
(see, for instance, \cite{AubFra90,Schi07}).
The following inclusion relation is known to hold in general
$$
  \Iang{S}{\bar x}\subseteq\WIang{S}{\bar x}\subseteq\Tang{S}{\bar x}.
$$
When, in particular, $S$ is locally convex around $\bar x$, i.e.
there exists $r>0$ such that $S\cap\ball{\bar x}{r}$ is convex,
then
$$
  \cl\Iang{S}{\bar x}=\cl\WIang{S}{\bar x}=\Tang{S}{\bar x}
$$
(see, for instance, \cite[Proposition 11.1.2(d)]{Schi07}).
The contingent cone will be the main object of study in the present
analysis. It follows from its very definition that it is determined
only by the geometric shape of a set near the reference point, namely for
any $r>0$ it is
\begin{equation}      \label{eq:locbehaTang}
  \Tang{S}{\bar x}= \Tang{S\cap\ball{\bar x}{r}}{\bar x}.
\end{equation}
Of course, whenever $S$ is a closed convex cone, one finds
$\Tang{S}{\nullv}=S$.

\begin{remark}     \label{rem:charcontcone}
Given a nonempty $S\subseteq\R^n$ and $\bar x\in S$, the following
characterization of $\Tang{S}{\bar x}$ in terms of the Dini lower
derivative of the function $x\mapsto\dist{x}{S}$ at $\bar x$
will be useful
$$
  \Tang{S}{\bar x}=\left\{v\in\R^n:\ \liminf_{t\downarrow 0}
  {\dist{\bar x+tv}{S}\over t}=0\right\}
$$
(see \cite[Proposition 11.1.5]{Schi07} and \cite{AubFra90},
where the above equality actually appears as a definition of the
contingent cone to $S$ at $\bar x $).
\end{remark}

Given a cone $C\subseteq\R^m$, the set
$$
  \ndc{C}=\{v\in\R^m:\ \langle v,c\rangle\le 0,\quad\forall c\in C\}
$$
is called (negative) dual cone of $C$. Whenever $S$ is locally convex
around $\bar x$ (and hence $\Tang{S}{\bar x}$ is convex), such an operator is connected with
the normal cone to $S$ at $\bar x$ in the sense of convex analysis by the
following well-known relation
$$
  \Ncone{S}{\bar x}=\ndc{\Tang{S}{\bar x}}.
$$

\begin{remark}     \label{rem:ndccalcul}
Given $\Lambda\in\Lin(\R^n;\R^m)$ and a pair of closed convex
cones $Q\subseteq\R^n$ and $C\subseteq\R^m$, the following
useful calculus rule holds
$$
  \ndc{(Q\cap\Lambda^{-1}(C))}=\cl(\ndc{Q}+\Lambda^\top(\ndc{C}))
$$
(see \cite[Lemma 2.4.1]{Schi07}). Notice that the equalities
\begin{equation}    \label{eq:ndcintersect}
  \ndc{(Q_1\cap Q_2)}=\cl(\ndc{Q_1}+\ndc{Q_2})
\end{equation}
and
\begin{equation}    \label{eq:invadj}
  \ndc{(\Lambda^{-1}(C))}=\Lambda^\top(\ndc{C})
\end{equation}
are special cases of the above formula. If, in particular, the
qualification condition $\inte Q_1\cap\inte Q_2\ne\varnothing$
happens to be satisfied, then formula $(\ref{eq:ndcintersect})$
takes the simpler form
\begin{equation}     \label{eq:ndcintesecteq}
   \ndc{(Q_1\cap Q_2)}=\ndc{Q_1}+\ndc{Q_2}.
\end{equation}
\end{remark}


Let $\varphi:\R^n\longrightarrow\R\cup\{\pm\infty\}$ be a
function which is finite around $\bar x\in\R^n$. Following \cite{Mord06},
the sets
$$
  \FlD\varphi(\bar x)=\left\{v\in\R^n:\ \liminf_{x\to\bar x}
  {\varphi(x)-\varphi(\bar x)-\langle v,x-\bar x\rangle\over
  |x-\bar x|}\ge 0\right\}
$$
and
$$
  \FuD\varphi(\bar x)=\left\{v\in\R^n:\ \limsup_{x\to\bar x}
  {\varphi(x)-\varphi(\bar x)-\langle v,x-\bar x\rangle\over
  |x-\bar x|}\le 0\right\}
$$
are called the Fr\'echet subdifferential of $\varphi$ at $\bar x$
and the Fr\'echet upper subdifferential of $\varphi$ at $\bar x$,
respectively. It is readily seen that, whenever $\varphi$ is (Fr\'echet)
differentiable at $\bar x$, then $\FlD\varphi(\bar x)=\FuD\varphi(\bar x)=\{
\nabla\varphi(\bar x)\}$, whereas whenever $\varphi:\R^n\longrightarrow
\R$ is convex (resp. concave), the set $\FlD\varphi(\bar x)$ (resp.
$\FuD\varphi(\bar x)$) reduces to the subdifferential (resp.
superdifferential) of $\varphi$ at $\bar x$ in the sense of convex
analysis.

\begin{remark}    \label{rem:varformFuD}
The following variational description of the Fr\'echet upper subdifferential
of $\varphi$ at $\bar x$ will be exploited in the sequel: for every $v\in\FuD
\varphi(\bar x)$ there exists a function $\sigma:\R^n\longrightarrow\R$,
differentiable at $\bar x$ and with $\varphi(\bar x)=\sigma(\bar x)$,
such that $\varphi(x)\le\sigma(x)$ for every $x\in\R^n$ and
$\nabla\sigma(\bar x)=v$ (see \cite[Theorem 1.88]{Mord06}).
\end{remark}

While cones are the basic objects for approximating sets, positively
homogeneous set-valued mappings are the basic tools for approximating
multifunctions.
Recall that a set-valued mapping $H:\R^n\rightrightarrows\R^m$ is
positively homogeneous (for short, p.h.) if $\nullv\in\ H(\nullv)$ and
$$
  H(\lambda x)=\lambda H(x),\quad\forall \lambda>0,\, \forall x\in\R^n.
$$
Within the class of p.h. set-valued mappings, fans will play a prominent
role in the present analysis.

\begin{definition}[Fan]
A set-valued mapping $H:\R^n\rightrightarrows\R^m$ is said to be
a {\it fan} if it fulfils all the following conditions:

\begin{itemize}

\item[(i)] it is p.h.;

\item[(ii)] it is convex-valued;

\item[(iii)] it holds
$$
  H(x_1+x_2)\subseteq H(x_1)+H(x_2),\quad\forall x_1,\, x_2
  \in\R^n.
$$
\end{itemize}
\end{definition}

Fans are set-valued mappings with a useful geometric structure, arising
in a large variety of contexts. It is clear that the class of all fans
acting between $\R^n$ and $\R^m$ includes, as a very special case, the space
$\Lin(\R^n;\R^m)$. Below, several examples of fans, mostly
taken from \cite{Ioff81}, are presented and discussed.

\begin{example}[Fans generated by linear mappings]     \label{ex:lingenfan}
Let ${\mathcal G}\subseteq\Lin(\R^n;\R^m)$ be a nonempty, convex and
closed set. The set-valued mapping $H:\R^n\rightrightarrows\R^m$ defined
by
$$
  H(x)=\{\Lambda x:\ \Lambda\in {\mathcal G}\}
$$
is known to be a fan. In such a circumstance, the set ${\mathcal G}$  will be
called a generator for $H$. In particular,
whenever ${\mathcal G}$ is a polytope in $\Lin(\R^n;\R^m)$, the
fan generated by ${\mathcal G}$ will be said to be finitely-generated.
For example, in the case $m=n$, one may take the class of all linear
mappings represented by $n\times n$ doubly stochastic matrices.
After the Birkhoff-von Newmann theorem, this class is known to be a
polytope, resulting from the convex hull of the permutation matrices,
which are its extreme elements (see \cite{BorLew00}).
Note that any finitely-generated fan takes compact values which are
polytopes in the range space $\R^m$. In general,
for any fan $H$ generated by linear mappings it must be $H(\nullv)=\{
\nullv\}$.
\end{example}

\begin{example}[$C$-sub/superlevel set of sub/superlinear mappings]
Let us recall that a p.h. homogeneous mapping $h:\R^n\longrightarrow\R^m$ is
said to be $C$-sublinear (resp. $C$-superlinear) if
$$
   h(x_1+x_2)\le_Ch(x_1)+h(x_2),\quad\forall x_1,\, x_2\in\R^n
$$
(resp.
$$
  h(x_1)+h(x_2)\le_C h(x_1+x_2),\quad\forall x_1,\, x_2\in\R^n \ ).
$$
With a $C$-sublinear mapping $\underline{h}:\R^n\longrightarrow\R^m$ and
with a $C$-superlinear mapping $\overline{h}:\R^n\longrightarrow\R^m$
the following fans $\Lev{\le_C\underline{h}}:\R^n\rightrightarrows\R^m$,
$\Lev{\ge_C\overline{h}}:\R^n\rightrightarrows\R^m$, and
$[\overline{h},\underline{h}]:\R^n\rightrightarrows\R^m$
are naturally associated:
$$
  \Lev{\le_C\underline{h}}(x)=\{y\in\R^m:\ y\le_C\underline{h}(x)\},
  \qquad\qquad
  \Lev{\ge_C\overline{h}}(x)=\{y\in\R^m:\ \overline{h}(x)\le_C y\},
$$
and
$$
  [\overline{h},\underline{h}](x)=\{y\in\R^m:\ \overline{h}(x)\le_C
  y\le_C\underline{h}(x)\}.
$$
It is interesting to note that $\Lev{\le_C\underline{h}}$ and
$\Lev{\ge_C\overline{h}}$ are somehow connected with fans considered
in Example \ref{ex:lingenfan}, via the notion of support of a $C$-sublinear
mapping, i.e. the set
$$
  \partial\underline{h}=\{\Lambda\in\Lin(\R^n;\R^m):\ \Lambda x\le_C
  \underline{h}(x),\ \forall x\in\R^n\}.
$$
This is true provided that the partial ordering $\le_C$ is Dedekind complete
(any bounded from above set admits a least upper bound, i.e. supremum) and norm
monotone ($C-\sup\{x, -x\}\le_CC-\sup\{z, -z\}$ implies $|x|\le |z|$),
as it happens, for instance, if $C=\R^m_+$.
Indeed, as a consequence of the Hahn-Banach-Kantorovich theorem (see \cite{Kuta79})
a continuous sublinear mapping $\underline{h}:\R^n\longrightarrow\R^m$ can be pointwise
represented as
$$
  \underline{h}(x)=C-\max_{\Lambda\in\partial\underline{h}}\Lambda x,
$$
where $C-\max$ denotes the maximum of a subset of $\R^m$, with respect to the
partial ordering $\le_C$. Thus, one sees that $y\in\Lev{\le_C\underline{h}}(x)$
iff there exists $\Lambda\in\partial\underline{h}$ such that $y\le_C\Lambda x$,
that is $y\in\Lambda x-C$. Consequently, the representation
$$
  \Lev{\le_C\underline{h}}(x)=\{y\in\R^m:\ y\in\Lambda x-C,\ \Lambda\in
  \partial\underline{h}\}
$$
holds. An analogous representation holds true for $\Lev{\ge_C\overline{h}}$,
where the support must be replaced by the upper support of a $C$-superlinear
mapping.
\end{example}

\begin{example}    \label{ex:singularfan}
The set-valued mapping $H:\R\rightrightarrows\R$, defined by
$$
  H(x)=\left\{\begin{array}{ll}
     -\R_+, & \hbox{ if } x<0, \\
     \R,   & \hbox{ if } x=0, \\
     \R_+ & \hbox{ if } x>0,
      \end{array}\right.
$$
is a fan. Since it is $H(0)=\R$, it is clear that $H$ can not be
generated by any set ${\mathcal G}\subseteq\Lin(\R;\R)$. Besides,
since if $h:\R\longrightarrow\R$ is p.h., then $h(0)=0$, one has that $H$
can not be represented as a $\R_+$-sub/superlevel set associated with
some $\R_+$-sub/superlinear mapping.
\end{example}

According to the present approach of analysis, the upper inverse image
of $C$ through a given fan will be a key element to express the
tangential approximation of $\Solv{\IGE}$. In this perspective,
the next remark gathers some elementary algebraic/topological
properties of such a set.

\begin{remark}   \label{rem:Hucone}
(i) It is plain to see that if $H:\R^n\rightrightarrows\R^m$ is a fan
and $C\subseteq\R^m$ is a closed convex cone, then the set $H^\upinv(C)$
is a convex cone (possibly empty). Notice that, in general, $H^\upinv(C)$ may happen to be
not closed. For example, if taking $C=\R_+$ and such a fan $H:\R
\rightrightarrows\R$ as defined in Example \ref{ex:singularfan},
one finds $H^\upinv(C)=(0,+\infty)$ (consistently, $H$ fails to be
l.s.c. at $0$).

(ii) It is worth noting that, in the case of a fan generated by a
set ${\mathcal G}\subseteq\Lin(\R^n;\R^m)$, it results in
$$
  H^\upinv(C)=\bigcap_{\Lambda\in{\mathcal G}}\Lambda^{-1}(C).
$$
As an immediate consequence of the last equality, one deduces that
the convex cone $H^\upinv(C)$ is closed, whenever $H$ is a fan generated
by linear mappings. Furthermore, if a fan $H$ is finitely-generated, i.e.
${\mathcal G}=\conv\{\Lambda_1,\dots,\Lambda_p\}$, with $\Lambda_i\in
\Lin(\R^n;\R^m)$, for $=1,\dots,p$, then it results in
$$
  H^\upinv(C)=\bigcap_{i=1}^p\Lambda_i^{-1}(C).
$$
In this case, each set $\Lambda_i^{-1}(C)$ turns out to be polyhedral,
provided that $C$ is so, and therefore $ H^\upinv(C)$ inherits a
polyhedral cone structure.

(iii) Whenever $H:\R^n\rightrightarrows\R^m$ is a fan generated by
a bounded set ${\mathcal G}\subseteq\Lin(\R^n;\R^m)$, it turns out
to be Lipschitz. More precisely, if $l=\sup\{\|\Lambda\|:\ \Lambda\in
{\mathcal G}\}<+\infty$, it holds
$$
  \Haus{H(x_1)}{H(x_2)}\le l |x_1-x_2|,\quad\forall x_1,\, x_2\in\R^n.
$$
Indeed, since for any $y\in\R^m$ it is
$$
  \dist{y}{H(x_2)}=\inf_{\Lambda\in{\mathcal G}}|y-\Lambda x_2|,
$$
then, if $y=\tilde\Lambda x_1$ for some $\tilde\Lambda\in{\mathcal G}$,
it results in
$$
  \dist{\tilde\Lambda x_1}{H(x_2)}=\inf_{\Lambda\in{\mathcal G}}
  |\tilde\Lambda x_1-\Lambda x_2|\le |\tilde\Lambda x_1-\tilde\Lambda x_2|
  \le\|\tilde\Lambda\| |x_1-x_2|.
$$
It follows
\begin{eqnarray*}
  \exc{H(x_1)}{H(x_2)} &=& \sup_{y\in H(x_1)}\dist{y}{H(x_2)}=
  \sup_{\Lambda\in{\mathcal G}}\dist{\Lambda x_1}{H(x_2)}   \\
  &\le & \sup_{\Lambda\in{\mathcal G}}\|\Lambda\| |x_1-x_2|
  \le l |x_1-x_2|,\quad\forall x_1,\, x_2\in\R^n.
\end{eqnarray*}
In particular, all finitely-generated fans are Lipschitz continuous
and, if ${\mathcal G}=\conv\{\Lambda_1,\dots,\Lambda_p\}$, it results
in $l=\max_{i=1,\dots,p}\|\Lambda_i\|$.
\end{remark}

The aforementioned features motivate the choice of fans as a
possible tool for approximating more general and less structured
set-valued mappings.

In view of the employment of the metric $C$-increase property
in the present approach,
the next proposition provide conditions for a fan to be globally
metrically $C$-increasing.

\begin{proposition}     \label{pro:sufcondmincr}
Let $H:\R^n\rightrightarrows\R^m$ be a fan. If
\begin{equation}    \label{in:scgmetincr}
   \exists\eta>0\ \exists u\in\Uball:\ H(u)+\eta\Uball\subseteq C,
\end{equation}
then $H$ is globally metrically $C$-increasing and $\gincr{H}\ge\eta+1$.
Conversely, if the fan $H:\R^n\rightrightarrows\R^m$ takes compact values,
then condition $(\ref{in:scgmetincr})$ is also necessary for $H$ to be
globally metrically $C$-increasing.
\end{proposition}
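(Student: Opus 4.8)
The plan is to treat the two implications separately, reducing each to the enlargement reformulation $\ball{S}{r}=S+r\Uball$ of Remark \ref{rem:vectprops}(ii) and to the conic/convex arithmetic collected in Remark \ref{rem:vectprops}.

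For the sufficiency, I would assume $(\ref{in:scgmetincr})$ and set $\alpha=\eta+1>1$. Given $x\in\R^n$ and $r>0$, the natural candidate is $z=x+ru$; since $|u|\le 1$ one has $|z-x|=r|u|\le r$, so $z\in\ball{x}{r}$. Using subadditivity of the fan together with positive homogeneity, $H(z)=H(x+ru)\subseteq H(x)+H(ru)=H(x)+rH(u)$, whence $H(z)+\alpha r\Uball\subseteq H(x)+rH(u)+\alpha r\Uball$, and the claim reduces to showing $rH(u)+\alpha r\Uball\subseteq C+r\Uball$. Writing $\alpha r\Uball=r\eta\Uball+r\Uball$ (Remark \ref{rem:vectprops}(i)) and using $rC=C$ together with $(\ref{in:scgmetincr})$, one gets $rH(u)+r\eta\Uball\subseteq rC=C$, so that $H(z)+\alpha r\Uball\subseteq H(x)+C+r\Uball\subseteq\ball{H(x)+C}{r}$ (the last inclusion holding for any set). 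This is exactly the inclusion in $(\ref{in:defglometincr})$ for $\alpha=\eta+1$, so $H$ is globally metrically $C$-increasing and, by definition of the exact bound, $\gincr{H}\ge\eta+1$.

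For the necessity, I would assume $H$ takes compact values and is globally metrically $C$-increasing with constant $\alpha>1$, and apply the defining inclusion $(\ref{in:defglometincr})$ at $x=\nullv$ and $r=1$, obtaining some $u\in\ball{\nullv}{1}=\Uball$ with $H(u)+\alpha\Uball\subseteq H(\nullv)+C+\Uball$. The first key observation is that $H(\nullv)=\{\nullv\}$: positive homogeneity gives $H(\nullv)=\lambda H(\nullv)$ for every $\lambda>0$, so $H(\nullv)$ is a convex cone, and a compact cone reduces to $\{\nullv\}$. Hence $H(u)+\alpha\Uball\subseteq C+\Uball=\Uball+C$. Splitting $\alpha\Uball=(\alpha-1)\Uball+\Uball$ and setting $A=H(u)+(\alpha-1)\Uball$ (a compact convex set, by compactness of $H(u)$ and convex-valuedness of $H$), this reads $A+\Uball\subseteq\Uball+C$. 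Applying the order cancellation law of Remark \ref{rem:vectprops}(iii) with the closed convex cone $C$, one cancels $\Uball$ and obtains $A=H(u)+(\alpha-1)\Uball\subseteq C$, i.e. $(\ref{in:scgmetincr})$ with $\eta=\alpha-1>0$ and the chosen $u\in\Uball$.

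The main obstacle is concentrated in the necessity part, where the compactness hypothesis plays a double role that must be made explicit: it forces the a priori possibly large term $H(\nullv)$ to collapse to $\{\nullv\}$, removing the extraneous set from the right-hand side, and it simultaneously guarantees that $H(u)+(\alpha-1)\Uball$ is compact convex, which is precisely what is needed to invoke the order cancellation law. The sufficiency part, by contrast, is a direct computation once the displacement $z=x+ru$ is chosen and the fan structure is used.
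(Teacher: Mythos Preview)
Your proof is correct and follows essentially the same route as the paper: the sufficiency uses the displacement $z=x+ru$, the fan subadditivity/homogeneity, and the splitting $(\eta+1)r\Uball=r\eta\Uball+r\Uball$, while the necessity specializes the defining inclusion at $x=\nullv$, $r=1$, reduces $H(\nullv)$ to $\{\nullv\}$ via compactness, and then cancels $\Uball$ by the extended order cancellation law of Remark~\ref{rem:vectprops}(iii). Your explicit remark on the double role of the compactness hypothesis is a useful addition.
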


\begin{proof}
Take arbitrary $x\in\R^n$ and $r>0$. Letting $u\in\Uball$ and $\eta>0$ as in
condition $(\ref{in:scgmetincr})$ and setting $z=x+ru$, one has that
$z\in\ball{x}{r}$ and obtains
\begin{eqnarray*}
  H(z)+(\eta+1)r\Uball &\subseteq & H(x)+rH(u)+\eta r\Uball+r\Uball
  =H(x)+r(H(u)+\eta\Uball)+r\Uball  \\
  &\subseteq & H(x)+C+r\Uball.
\end{eqnarray*}
According to Definition \ref{def:metincr}(ii) and Remark \ref{rem:equivrefCincr},
this proves that $H$ is globally metrically $C$-increasing.

Conversely, observe first of all that if $H$ takes compact values, then
it must be $H(\nullv)=\{\nullv\}$. Indeed, as $H$ is p.h., one has
$\lambda H(\nullv)=H(\lambda\nullv)=H(\nullv)$, for any $\lambda>0$, so
$H(\nullv)$ is a cone, but $\{\nullv\}$ is the only compact cone.
Now, if $H$ is globally metrically $C$-increasing, for some $\alpha\in(1,
\gincr{H})$, taking $x=\nullv$ and $r=1$, there exists $v\in\Uball$ such that
$$
   H(v)+\alpha\Uball\subseteq H(\nullv)+C+\Uball,
$$
and hence, on account of Remark \ref{rem:vectprops}(i), it holds
$$
   H(v)+(\alpha-1)\Uball+\Uball\subseteq H(\nullv)+C+\Uball
   =C+\Uball.
$$
Since $H(v)+(\alpha-1)\Uball$ and $\Uball$ are compact convex sets,
by virtue of what noticed in Remark \ref{rem:vectprops}(iii) (extended
order cancellation law), from the last inclusion one obtains condition
$(\ref{in:scgmetincr})$, with $\eta=\alpha-1>0$.
\end{proof}

\begin{remark}
(i) Notice that the condition for metric $C$-increase expressed
by $(\ref{in:scgmetincr})$ requires that $\inte C\ne\varnothing$.
As a consequence, whenever working with finitely generated fans,
which are supposed to be globally metrically $C$-increasing, one
is forced to assume that $\inte C\ne\varnothing$.

(ii) Condition $(\ref{in:scgmetincr})$ may be read in terms of
``positivity". Take into account that, with reference to the
partial order induced by $C$, the elements in $C$ are the positive
ones. Thus, condition $(\ref{in:scgmetincr})$ postulates the
existence of a direction, along which $H$ takes strictly positive
values only.
\end{remark}

\begin{example}
According to Definition \ref{def:metincr}, the fan $H$ considered in
Example \ref{ex:singularfan} fails to be metrically $\R_+$-increasing
around each point of $\R$, relative to $S=\R$. Observe that
condition $(\ref{in:scgmetincr})$ is not satisfied.
\end{example}

From condition $(\ref{in:scgmetincr})$ one can derive a sufficient
condition for the global metric $C$-increase property, which is specific for
fans generated by regular linear mappings. Recall that if $\Lambda\in
\Lin(\R^n;\R^m)$ is regular (i.e. onto, or equivalently it is an epimorphism),
then there exists $\eta>0$ such that $\Lambda\Uball\supseteq\eta\Uball$.
The quantity
$$
  \sur{\Lambda}=\sup\{\eta>0:\ \Lambda\Uball\supseteq\eta\Uball\}
$$
is called exact openness bound of $\Lambda$ and is used to provide a measure
of the regularity (openness or covering) of $\Lambda$.
For more details on the notion of openness of linear mappings
the reader is referred to \cite[Section 1.2.3]{Mord06}. In particular,
for exact estimates of $\sur{\Lambda}$, see \cite[Corollary 1.58]{Mord06}.

\begin{corollary}
Let $H:\R^n\rightrightarrows\R^m$ be a fan generated by ${\mathcal G}
\subseteq\Lin(\R^n;\R^m)$. Suppose that
$$
  \inf_{\Lambda\in{\mathcal G}}\sur{\Lambda}>0.
$$
If
$$
  \inte\left(\bigcap_{\Lambda\in{\mathcal G}}\Lambda^{-1}(C)\right)
  \ne\varnothing,
$$
then $H$ is globally metrically $C$-increasing.
\end{corollary}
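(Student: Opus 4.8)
The plan is to reduce the statement to the sufficient condition $(\ref{in:scgmetincr})$ of Proposition \ref{pro:sufcondmincr}: it suffices to exhibit $u\in\Uball$ and $\eta>0$ with $H(u)+\eta\Uball\subseteq C$, whereupon the global metric $C$-increase of $H$ (and the estimate on $\gincr{H}$) follows at once. So the whole proof amounts to manufacturing such a pair $(u,\eta)$ out of the two hypotheses.

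First I would convert the assumption $\inf_{\Lambda\in{\mathcal G}}\sur{\Lambda}>0$ into a single uniform covering estimate. Setting $s=\inf_{\Lambda\in{\mathcal G}}\sur{\Lambda}>0$, I claim $\Lambda\Uball\supseteq s\Uball$ for every $\Lambda\in{\mathcal G}$. Indeed, for fixed $\Lambda$ the set $\{\eta>0:\ \Lambda\Uball\supseteq\eta\Uball\}$ is an interval with supremum $\sur{\Lambda}$, and since $\Lambda\Uball$ is compact, hence closed, a straightforward limiting argument upgrades this to $\sur{\Lambda}\Uball\subseteq\Lambda\Uball$; as $\sur{\Lambda}\ge s$, the claim follows for all generators simultaneously.

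Next, using Remark \ref{rem:Hucone} I would identify $H^\upinv(C)=\bigcap_{\Lambda\in{\mathcal G}}\Lambda^{-1}(C)$, which is a convex cone whose interior is nonempty by hypothesis. Because the interior of a convex cone is again a cone (multiplication by a positive scalar being a homeomorphism preserving the cone), I may select $u\in\inte H^\upinv(C)$ with $|u|\le 1$, i.e. $u\in\Uball$, and then pick $\rho>0$ with $\ball{u}{\rho}\subseteq H^\upinv(C)$. For each $\Lambda\in{\mathcal G}$ this gives $\Lambda(\ball{u}{\rho})\subseteq C$, while the uniform covering estimate yields $\Lambda u+\rho s\Uball\subseteq\Lambda u+\rho\Lambda\Uball=\Lambda(\ball{u}{\rho})\subseteq C$. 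Taking the union over $\Lambda\in{\mathcal G}$ and recalling $H(u)=\{\Lambda u:\ \Lambda\in{\mathcal G}\}$, one obtains $H(u)+\rho s\Uball\subseteq C$, which is precisely condition $(\ref{in:scgmetincr})$ with $\eta=\rho s>0$. Invoking Proposition \ref{pro:sufcondmincr} then closes the argument.

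The main obstacle is the passage from the \emph{pointwise} openness bounds $\sur{\Lambda}$ to a \emph{single} radius $s$ with $s\Uball\subseteq\Lambda\Uball$ holding for all generators at once: this is exactly the step where the hypothesis $\inf_{\Lambda\in{\mathcal G}}\sur{\Lambda}>0$ is indispensable, and it requires the closedness of $\Lambda\Uball$ to attain the bound at the supremum. The remaining delicate point is the mild bookkeeping needed to place the interior point $u$ inside $\Uball$ via positive homogeneity of the cone $H^\upinv(C)$; everything past that is a direct transcription into $(\ref{in:scgmetincr})$.
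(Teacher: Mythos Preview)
Your proposal is correct and follows essentially the same approach as the paper: pick an interior point $u$ of $\bigcap_{\Lambda\in{\mathcal G}}\Lambda^{-1}(C)$, normalize it into $\Uball$ using the cone structure, use the uniform openness bound to push a ball around each $\Lambda u$ into $C$, and invoke Proposition~\ref{pro:sufcondmincr}. The only cosmetic differences are that the paper takes a value strictly below $\inf_{\Lambda\in{\mathcal G}}\sur{\Lambda}$ (sidestepping your attainment argument for $s\Uball\subseteq\Lambda\Uball$), and it spends a few lines arguing $u\ne\nullv$ via pointedness of $C$, whereas you scale directly using positive homogeneity of the interior.
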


\begin{proof}
By hypothesis, there exist $u\in\R^n$ and $\epsilon>0$ such that
$$
  u+\epsilon\Uball\subseteq\bigcap_{\Lambda\in{\mathcal G}}
  \Lambda^{-1}(C).
$$
Notice that it is possible to assume that $u\ne\nullv$, because
if it is $\nullv\in\inte(\cap_{\Lambda\in{\mathcal G}}\Lambda^{-1}(C))$,
that is $\epsilon\Uball\subseteq\cap_{\Lambda\in{\mathcal G}}
\Lambda^{-1}(C)$, then there must exist $x\ne\nullv$ such that
$$
  \Lambda x\in C\qquad\hbox{ and }\qquad \Lambda(-x)\in C,
  \quad\forall \Lambda\in{\mathcal G}.
$$
Since $C$ is a pointed cone, the above inclusions imply
$\Lambda x=\nullv$, so $x\in\Lambda^{-1}(C)$ for every $\Lambda
\in{\mathcal G}$. Since $\cap_{\Lambda\in{\mathcal G}}\Lambda^{-1}(C)$
is a cone, it is possible to assume furthermore that $u\in\Uball$.
Letting $0<\eta<\inf_{\Lambda\in{\mathcal G}}\sur{\Lambda}$, since
$\sur{\Lambda}>\eta$ for every $\Lambda\in{\mathcal G}$, one has
$$
  \Lambda(\epsilon\Uball)\supseteq\epsilon\eta\Uball,\quad
  \forall\Lambda\in{\mathcal G}.
$$
Therefore, it holds
$$
  \Lambda u+\epsilon\eta\Uball\subseteq\Lambda(u+\epsilon\Uball)
  \subseteq C, \quad\forall\Lambda\in{\mathcal G}.
$$
According to the definition of $H$, it follows that $H(u)+\epsilon
\eta\Uball\subseteq C$, so the sufficient condition $(\ref{in:scgmetincr})$
for a fan to be globally metrically $C$-increasing is satisfied. The thesis
follows from Proposition \ref{pro:sufcondmincr}.
\end{proof}


In order to utilize p.h. set-valued mappings and, in particular, fans
as an approximation tool for general multivalued mappings, a concept
of differentiation is needed.  Among various proposals extending
differential calculus to a set-valued context, motivated by the
specific features of the subject under study, the notion of prederivative
as can be found in \cite{Ioff81} is here employed. Such a
notion has been recently investigated for different purposes
also in \cite{GaGeMa16,Pang11}.

\begin{definition}[Prederivative]      \label{def:prederiv}
Let $F:\R^n\rightrightarrows\R^m$ be
a set-valued mapping and let $\bar x\in\dom F$. A p.h. set-valued mapping
$H:\R^n\rightrightarrows\R^m$ is said to be a

\begin{itemize}

\item[(i)] {\it outer prederivative} of $F$ at $\bar x$ if for every $\epsilon>0$
there exists $\delta>0$ such that
$$
  F(x)\subseteq F(\bar x)+H(x-\bar x)+\epsilon |x-\bar x|\Uball,
  \quad\forall x\in\ball{\bar x}{\delta};
$$

\item[(ii)] {\it inner prederivative} of $F$ at $\bar x$ if for every $\epsilon>0$
there exists $\delta>0$ such that
$$
  F(\bar x)+H(x-\bar x)\subseteq F(x)+\epsilon |x-\bar x|\Uball,
  \quad\forall x\in\ball{\bar x}{\delta};
$$

\item[(iii)] {\it prederivative} of $F$ at $\bar x$ if $H$ is both, an
outer and an inner prederivative of $F$ at $\bar x$.
\end{itemize}
\end{definition}

It is clear that, whenever a set-valued mapping $F$ happens to be
single-valued in a neighbourhood of $\bar x$ and $H$ is a p.h. mapping,
then all cases (i), (ii), and (iii) in Definition \ref{def:prederiv}
coincide with the notion of Bouligand derivative (a.k.a. B-derivative),
as introduced in \cite{Robi91}. In particular, if $H\in\Lin(\R^n;\R^m)$
the above three notions collapse to the notion of Fr\'echet differentiability
for mappings.
In full analogy with the calculus for single-valued smooth mappings, in the
current context a strict variant of the notion of prederivative, which will
be employed in the sequel, may be formulated following \cite{GaGeMa16,Pang11}.

\begin{definition}[Strict prederivative]     \label{def:strictprederiv}
Let $F:\R^n\rightrightarrows\R^m$ be
a set-valued mapping and let $\bar x\in\dom F$. A p.h. set-valued mapping
$H:\R^n\rightrightarrows\R^m$ is said to be a {\it strict prederivative}
of $F$ at $\bar x\in\dom F$ if for every $\epsilon>0$ there exists $\delta>0$
such that
$$
  F(x_2)\subseteq F(x_1)+H(x_2-x_1)+\epsilon |x_2-x_1|\Uball,
  \quad\forall x_1,\, x_2\in\ball{\bar x}{\delta}.
$$
\end{definition}

An articulated discussion on the existence of prederivatives and strict
prederivative, on calculus rules and connections with regularity properties, can
be found in \cite{GaGeMa16,Pang11}.

\begin{example}[Uniformly smooth mappings] Let $\Xi$ be a nonempty
parameter set and let $f:\R^n\times\Xi\longrightarrow
\R^m$ be a mapping which is smooth at a point $\bar x\in\R^n$ with respect
to $x$, uniformly in $\xi$, in the sense that for every $\xi\in\Xi$
there exists $\Lambda_\xi\in\Lin(\R^n;\R^m)$ such that
$$
  f(x,\xi)=f(\bar x,\xi)+\Lambda_\xi(x-\bar x)+o_\xi(|x-\bar x|)
$$
and
$$
  \lim_{x\to\bar x}\sup_{\xi\in\Xi}{|o_\xi(|x-\bar x|)|\over|x-\bar x|}
  =0.
$$
Define a set-valued mapping $F:\R^n\rightrightarrows\R^m$ as
$$
   F(x)=f(x,\Xi).
$$
It is possible to show that if the set ${\mathcal G}=\{\Lambda_\xi:\
\xi\in\Xi\}$ is closed and convex in $\Lin(\R^n;\R^m)$, then the fan
$H:\R^n\rightrightarrows\R^m$ generated by the set ${\mathcal G}$
is an outer prederivative of $F$ at $\bar x$.
\end{example}

\begin{remark}
The reader should notice that notion in Definition \ref{def:prederiv}(ii) and
consequently in Definition \ref{def:prederiv}(iii) are different from
the notion of inner $T$-derivative and of $T$-derivative, respectively,
as proposed in \cite{Pang11}. This because the term $H(x-\bar x)$
appears in the left side of the inclusion in Definition \ref{def:prederiv}
(ii). Such a choice entails that a strict prederivative in the
sense of Definition \ref{def:strictprederiv} could fail to be a
prederivative of the same set-valued mapping. This fact is in
contrast with what happens for $T$-derivative e strict $T$-derivative
and therefore it causes a shortcoming in the resulting theory.
Nevertheless, such a choice seems to be unavoidable in order to
obtain the outer tangential approximation of $\Solv{\IGE}$,
where the values of $H$ must be included in $\Tang{C}{\bar y}$,
for $\bar y\in F(\bar x)$ (see the proof of Theorem \ref{thm:otanapproxsol}).
In this concerns, it could be relevant to observe that in
\cite[Definition 9.1]{Ioff81} (where $F$ is single-valued), the
p.h. term appears in the left side of the inclusion defining the
inner prederivative.
\end{remark}

The next result shows how local approximations expressed by
certain prederivatives can be exploited to formulate a condition
for the metric $C$-increase property of set-valued mappings
around a reference point, relative to a given set.

\begin{proposition}[Metric $C$-increase via strict prederivative]
\label{pro:metincrsopred}
Let $F:\R^n\rightrightarrows\R^m$ be a set-valued mapping, let
$S\subseteq\R^n$ be a closed set and let $\bar x\in\dom F\cap S$.
Suppose that:

(i) $F$ admits a strict prederivative $H:\R^n\rightrightarrows\R^m$
at $\bar x$;

(ii) there exist $\eta>0$ and $\delta>0$ such that
\begin{equation}     \label{in:locsufcondmCincr}
   \forall x\in\ball{\bar x}{\delta}\cap S\
   \exists u\in\Uball\cap\WIang{S}{x}:\ H(u)+\eta\Uball\subseteq C.
\end{equation}
\noindent Then, $F$ is metrically $C$-increasing around $\bar x$
relative to $S$ with
\begin{equation}
  \incr{F}{S}{\bar x}\ge\eta+1.
\end{equation}
\end{proposition}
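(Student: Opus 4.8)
The plan is to verify the reformulated inclusion $(\ref{def:metincrref})$ for every prescribed $\alpha\in(1,\eta+1)$, exhibiting for each such $\alpha$ a radius that works uniformly; this yields $\incr{F}{S}{\bar x}\ge\eta+1$. Fix $\alpha\in(1,\eta+1)$ and choose $\epsilon\in(0,\eta+1-\alpha)$, so that $\eta-\epsilon>\alpha-1>0$. First I would feed this $\epsilon$ into the strict prederivative of hypothesis (i), obtaining $\delta_\epsilon>0$ with $F(x_2)\subseteq F(x_1)+H(x_2-x_1)+\epsilon|x_2-x_1|\Uball$ for all $x_1,x_2\in\ball{\bar x}{\delta_\epsilon}$; then I would shrink the scale, setting $\delta_0=\min\{\delta_\epsilon,\delta\}$ (with $\delta$ as in hypothesis (ii)) and finally using $\delta_0/2$ as working radius, so that any point within distance $r<\delta_0/2$ of an $x\in\ball{\bar x}{\delta_0/2}\cap S$ still lies in $\ball{\bar x}{\delta_0}$, where both the prederivative estimate and condition $(\ref{in:locsufcondmCincr})$ are available.

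The heart of the argument is a single-step computation. Given $x\in\ball{\bar x}{\delta_0/2}\cap S$, condition (ii) supplies $u\in\Uball\cap\WIang{S}{x}$ with $H(u)+\eta\Uball\subseteq C$. For any feasible point $z=x+tu\in S$ with $t>0$, positive homogeneity of $H$ together with the prederivative estimate give $F(z)\subseteq F(x)+tH(u)+\epsilon t\Uball$, whence, using $\Uball$-arithmetic (Remark \ref{rem:vectprops}(i)) and $t(H(u)+\eta\Uball)\subseteq tC=C$,
$$
  F(z)+\alpha r\Uball\subseteq F(x)+tH(u)+(\epsilon t+\alpha r)\Uball\subseteq F(x)+C+r\Uball
$$
as soon as $\epsilon t+\alpha r\le t\eta+r$, i.e. as soon as $t\ge t_0:=(\alpha-1)r/(\eta-\epsilon)$. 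The choice of $\epsilon$ guarantees $t_0<r$, so the target inclusion $(\ref{def:metincrref})$ would follow immediately from a single \emph{good feasible step} of length in $[t_0,r)$, giving the bound $\alpha$, and letting $\alpha\uparrow\eta+1$ (with $\epsilon\downarrow 0$) drives $t_0\uparrow r$.

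The difficulty is that $\WIang{S}{x}$ only furnishes feasible points along $u$ at arbitrarily small scales, with no lower bound on admissible step lengths, so a good step of length $t_0$ need not exist outright. I would therefore manufacture the required length by chaining: build a sequence $x=x_0,x_1,\dots,x_N$ with $x_{k+1}=x_k+s_ku_k$, each $u_k\in\Uball\cap\WIang{S}{x_k}$ chosen via (ii) so that $H(u_k)+\eta\Uball\subseteq C$ and each $x_{k+1}\in S$, keeping the whole chain inside $\ball{\bar x}{\delta_0}$. Telescoping the prederivative estimate along the chain, and using that a finite Minkowski sum of the convex cone $C$ is again $C$, one gets $\sum_k s_k(H(u_k)+\eta\Uball)\subseteq C$ and hence, with total length $\sigma=\sum_k s_k$,
$$
  F(x_N)+\alpha r\Uball\subseteq F(x)+{\textstyle\sum_k}s_kH(u_k)+(\epsilon\sigma+\alpha r)\Uball\subseteq F(x)+C+r\Uball
$$
exactly when $\sigma\ge t_0$, while $|x_N-x|\le\sigma<r$ keeps $z:=x_N\in\ball{x}{r}\cap S$.

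The genuine obstacle, which I expect to be the delicate point, is then to drive the accumulated length $\sigma$ up to the threshold $t_0$ (close to $r$) without stalling, since the individual steps $s_k$ are not bounded below. I would resolve this by a limiting/maximality argument that exploits the two structural facts available: the closedness of $S$ and the validity of condition (ii) at \emph{every} point of $\ball{\bar x}{\delta_0}\cap S$. Concretely, one considers the supremum of lengths reachable by admissible chains; a length-maximizing sequence has endpoints that form a Cauchy sequence, converging by closedness of $S$ to a feasible limit point at which (ii) again provides a good weakly feasible direction and thus a further admissible extension, forcing the supremum up to $t_0$. Passing $F$ to this limit point through the prederivative estimate on the vanishing terminal displacement (or, equivalently, monitoring the associated excess function $\phi(x)=\exc{F(x)}{C}$ and invoking a decrease-principle argument, in the spirit advertised in the introduction) then yields an admissible $z$ realizing $\sigma\ge t_0$, and with it the inclusion $(\ref{def:metincrref})$ for the chosen $\alpha$.
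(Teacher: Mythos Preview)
The paper's proof is much shorter and proceeds by a \emph{single} step, with no chaining. Having obtained $u\in\Uball\cap\WIang{S}{x}$ from hypothesis~(ii), it simply picks any $t_*\in(0,r)$ with $z=x+t_*u\in S$ (as $\WIang{S}{x}$ permits) and writes
\[
  F(z)+(\eta+1-\epsilon)r\Uball\ \subseteq\ F(x)+t_*H(u)+\epsilon t_*\Uball+(\eta+1-\epsilon)r\Uball\ \subseteq\ F(x)+r[H(u)+\eta\Uball]+r\Uball\ \subseteq\ F(x)+C+r\Uball,
\]
thereby claiming the full rate $\eta+1-\epsilon$ from a step of \emph{arbitrary} length $t_*<r$. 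Your computation, by contrast, shows that a step of length $t$ only delivers the rate $1+(\eta-\epsilon)t/r$, and you are right: the paper's middle inclusion is not justified for general p.h.\ $H$ when $t_*<r$ (it would force $t_*H(u)\subseteq rH(u)+\epsilon(r-t_*)\Uball$, which fails already for $n=m=1$, $C=\R_+$, $H=\mathrm{id}$, $\eta=1$, small $\epsilon$ and $t_*\ll r$). So your chaining scheme is aimed at a genuine difficulty that the paper's argument passes over.

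That said, your closing ``maximality'' step is not yet a proof. If ``length-maximizing sequence'' means a sequence of distinct finite chains with lengths $\sigma_n\uparrow\sigma^*$, their endpoints need not converge, since different chains may head in unrelated directions. If instead you mean the endpoints $x_0,x_1,\dots$ along one infinite chain with $\sigma_n\uparrow\sigma^*\le t_0$, then $(x_n)$ is indeed Cauchy and $x_\infty=\lim x_n\in S$, and (ii) supplies a further good direction at $x_\infty$; but appending a step from $x_\infty$ produces an $(\omega+1)$-chain, whereas your telescoped prederivative estimate was derived only for finite chains --- passing $F$ across the limit requires an extra argument (one more use of the strict prederivative between $x_N$ and $x_\infty$ does not help, because $H(x_\infty-x_N)$ is not controlled by $(\ref{in:locsufcondmCincr})$). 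A transfinite construction, or a more careful limiting estimate, is needed to close this.
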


\begin{proof}
Fix $\epsilon\in (0,\min\{1,\,\eta\})$. According to hypothesis (i), there exists
$\delta_\epsilon>0$ such that
\begin{equation}    \label{in:scincrstroutpreder}
  F(x_1)\subseteq F(x_2)+H(x_1-x_2)+\epsilon |x_1-x_2|\Uball,
  \quad\forall x_1,\, x_2\in\ball{\bar x}{\delta_\epsilon}.
\end{equation}
Choose $\delta_*\in (0,\min\{\delta,\,\delta_\epsilon/3\})$ and take arbitrary
$x\in\ball{\bar x}{\delta_*}\cap S$ and $r\in (0,\delta_*)$.

Since $x\in \ball{\bar x}{\delta}$, by virtue of hypothesis (ii)
there exists $u\in\Uball\cap\WIang{S}{x}$ such that $H(u)+\eta\Uball\subseteq C$.
Since $u\in\WIang{S}{x}$, corresponding to $r$ there must exist
$t_*\in (0,r)$, such that $x+t_*u\in S$.
Thus, if defining $z=x+t_*u$, one has
$$
  |z-\bar x|\le |z-x|+|x-\bar x|\le r+\delta_*<{2\over 3}\delta_\epsilon.
$$
This means that $z\in\ball{\bar x}{\delta_\epsilon}$, so it is possible
to apply inclusion $(\ref{in:scincrstroutpreder})$, with $x_1=z$ and
$x_2=x$. Consequently, recalling Remark \ref{rem:vectprops}(i) and (ii),
one obtains
\begin{eqnarray*}
  F(z)+(\eta+1-\epsilon)r\Uball &\subseteq& F(x)+t_*H(u)+\epsilon t_*\Uball+
  (\eta+1-\epsilon)r\Uball \\
  &\subseteq & F(x)+r[H(u)+\eta\Uball]+\epsilon r\Uball+(1-\epsilon)r\Uball  \\
  & \subseteq & F(x)+rC+r\Uball=F(x)+C+r\Uball.
\end{eqnarray*}
As $z\in\ball{x}{r}\cap S$ and it is $\eta+1-\epsilon>1$, the last
inclusion shows that $F$ is metrically $C$-increasing around
$\bar x$ relative to $S$. The
arbitrariness of $\epsilon>0$ enables one to get the quantitative estimate
of $\incr{F}{S}{\bar x}$ in the thesis.
\end{proof}

Condition $(\ref{in:locsufcondmCincr})$ can be regarded as a localization
of condition $(\ref{in:scgmetincr})$. This shows that the approximation
apparatus based on prederivatives transforms properties of approximations
into corresponding properties of the mappings to be approximated, as it
happens with classical differential calculus and certain specific properties
such as metric regularity (see \cite{Ioff81,Mord06}).

\vskip1cm


\section{Tangential approximation of solution sets} \label{Sect3}

The first result exposed in the current section is a refinement of
an error bound estimate, which was recently established in a more
general setting (see \cite[Theorem 4.3]{Uder18}). Even though its
proof follows the same lines as in \cite{Uder18}, apart from a few
adjustments due to the presence of the set $S$, here it is provided in
full detail for the sake of completeness.

\begin{theorem}[Local error bound under metric $C$-increase]   \label{thm:erbometincr}
Let $F:\R^n\rightrightarrows\R^m$ be a set-valued mapping, let $S$
be a closed set defining a $(\IGE)$, and let $\bar x\in\Solv{\IGE}$.
Suppose that:

(i) $F$ is l.s.c. in a neighbourhood of $\bar x$ and Hausdorff $C$-u.s.c.
at $\bar x$;

(ii) $F$ is metrically $C$-increasing around $\bar x$, relatively to $S$.

\noindent Then, for every $\alpha\in (1,\incr{F}{S}{\bar x})$ there exists
$\delta_\alpha>0$ such that
\begin{equation}     \label{in:locerboCincr}
  \dist{x}{\Solv{\IGE}}\le {\exc{F(x)}{C}\over\alpha-1},\quad\forall
  x\in\ball{\bar x}{\delta_\alpha}\cap S.
\end{equation}
\end{theorem}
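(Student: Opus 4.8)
The plan is to transfer the set-valued metric $C$-increase hypothesis into a scalar decrease principle for the excess function $\phi(x)=\exc{F(x)}{C}$, and then to extract the error bound by means of the Ekeland variational principle, as announced in the introduction. First I would record the properties of $\phi$ that make this possible: since $\bar x\in\Solv{\IGE}$ one has $F(\bar x)\subseteq C$, hence $\phi(\bar x)=0$, and as $\phi\ge 0$ with $\phi(\bar x)=0$ the value $0$ is the minimum of $\phi$ over $S$. By Remark \ref{rem:semexcfunct}, hypothesis (i) guarantees that $\phi$ is l.s.c. on a neighbourhood of $\bar x$ and u.s.c. at $\bar x$, hence continuous at $\bar x$; in particular $\phi(x)\to 0$ as $x\to\bar x$. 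I would then fix a radius $\rho>0$ so small that $\phi$ is l.s.c. on the set $X=S\cap\ball{\bar x}{\rho}$, which, being closed and bounded, is a complete (indeed compact) metric space and serves as the arena for Ekeland's principle.

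Second, I would convert the metric $C$-increase into a decrease estimate. Fix $\alpha\in(1,\incr{F}{S}{\bar x})$ and take $\delta>0$ for which the inclusion in Definition \ref{def:metincr}(i) holds, written as in Remark \ref{rem:equivrefCincr}(ii): for every $x\in\ball{\bar x}{\delta}\cap S$ and $r\in(0,\delta)$ there is $z\in\ball{x}{r}\cap S$ with $F(z)+\alpha r\Uball\subseteq F(x)+C+r\Uball$. Applying $\exc{\cdot}{C}$ to this inclusion, bounding the right-hand side by $\phi(x)+r$ (using $\exc{F(x)+C}{C}=\phi(x)$ from Remark \ref{rem:excbehave}(ii), then the triangle inequality for the distance to absorb the $r\Uball$), and, when $\phi(z)>0$, evaluating the left-hand side exactly as $\phi(z)+\alpha r$ (via Remark \ref{rem:vectprops}(ii) to write $F(z)+\alpha r\Uball=\ball{F(z)}{\alpha r}$, then Remark \ref{rem:excbehave}(i)), I obtain $\phi(z)\le\phi(x)-(\alpha-1)r$. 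Thus every point of $\ball{\bar x}{\delta}\cap S$ carrying a positive $\phi$-value admits, at controlled distance $r$, a point of strictly smaller $\phi$-value.

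Third, I would run Ekeland. Take $x\in\ball{\bar x}{\delta_\alpha}\cap S$, with $\delta_\alpha$ so small that $\phi(x)$ is tiny there; if $\phi(x)=0$ then $F(x)\subseteq C$ by Remark \ref{rem:vectprops}(iv), so $x\in\Solv{\IGE}$ and the estimate is trivial, and I may assume $\phi(x)>0$. Applying the Ekeland principle to $\phi$ on $X$ with $\epsilon=\phi(x)$ and any $\lambda>\phi(x)/(\alpha-1)$ (small, since $\phi(x)$ is small), I obtain $\hat x\in X$ with $\phi(\hat x)\le\phi(x)$, $d(\hat x,x)\le\lambda$, and $\phi(y)+\tfrac{\epsilon}{\lambda}d(y,\hat x)\ge\phi(\hat x)$ for all $y\in X$. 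I claim $\hat x\in\Solv{\IGE}$: if not, $\phi(\hat x)>0$, and since $d(\hat x,\bar x)\le\lambda+\delta_\alpha<\delta$ the decrease step applies at $\hat x$, producing for small $r$ a point $z\in\ball{\hat x}{r}\cap S\subseteq X$. If $\phi(z)>0$, the decrease estimate and the Ekeland inequality give $\phi(\hat x)-(\alpha-1)r\ge\phi(z)\ge\phi(\hat x)-\tfrac{\epsilon}{\lambda}r$, whence $\alpha-1\le\epsilon/\lambda$, contradicting $\lambda>\phi(x)/(\alpha-1)$; if $\phi(z)=0$, then $z\in\Solv{\IGE}$ and the Ekeland inequality forces $\tfrac{\epsilon}{\lambda}r\ge\phi(\hat x)>0$, which fails once $r<\lambda\phi(\hat x)/\epsilon$. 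Either way a contradiction ensues, so $\phi(\hat x)=0$ and $\hat x\in\Solv{\IGE}$; hence $\dist{x}{\Solv{\IGE}}\le d(x,\hat x)\le\lambda$, and letting $\lambda\downarrow\phi(x)/(\alpha-1)$ yields $(\ref{in:locerboCincr})$.

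The main obstacle I anticipate is the parameter bookkeeping: every point entering the argument, namely $\hat x$ and the auxiliary $z$ produced at radius $r$, must be kept simultaneously inside the localized complete space $X$ where $\phi$ is l.s.c. and inside $\ball{\bar x}{\delta}$ where the metric $C$-increase is available, which forces $\lambda$, $\delta_\alpha$ and $r$ to be tuned together; this is precisely where the presence of the set $S$ adds friction relative to the unconstrained case. The only genuinely analytic point is the two-sided excess computation, and there the sole care needed is the case distinction $\phi(z)>0$ versus $\phi(z)=0$, since Remark \ref{rem:excbehave}(i) is available only for sets with strictly positive excess over $C$.
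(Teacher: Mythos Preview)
Your proposal is correct and follows essentially the same route as the paper: define $\phi(x)=\exc{F(x)}{C}$, use the semicontinuity assumptions to make $\phi$ amenable to Ekeland's principle on a small closed set in $S$, convert the metric $C$-increase inclusion into the scalar decrease $\phi(z)\le\phi(x)-(\alpha-1)r$ via Remark \ref{rem:excbehave}, and derive a contradiction when the Ekeland point has positive $\phi$-value. The tactical choices differ slightly---the paper takes $\lambda=\phi(x_0)/(\alpha-1)$ outright and the specific radius $r=\phi(x_\lambda)$, then splits into cases according to whether the auxiliary point $z$ remains in the Ekeland domain (falling back on $\bar x\in\Solv{\IGE}$ when it does not), whereas you take $\lambda>\phi(x)/(\alpha-1)$ and arbitrarily small $r$, which keeps $z$ inside the domain automatically and replaces that case split by the dichotomy $\phi(z)>0$ versus $\phi(z)=0$---but the underlying mechanism is identical.
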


\begin{proof}
Consider the function $\phi$, defined by
$$
  \phi(x)=\exc{F(x)}{C}.
$$
In the light of Remark \ref{rem:semexcfunct},
since $F$ is l.s.c. in $\ball{\bar x}{\delta_0}$, for some $\delta_0>0$,
so is $\phi$.
By hypothesis (ii), fixed $\alpha\in (1,\incr{F}{S}{\bar x})$ there exists
$\delta_1>0$ such that
\begin{equation}    \label{in:hyplocincrdelta1}
   \forall x\in\ball{\bar x}{\delta_1}\cap S,\, \forall r\in (0,\delta_1),\
 \exists z\in\ball{x}{r}\cap S:\ \ball{F(z)}{\alpha r}\subseteq
 \ball{F(x)+C}{r}.
\end{equation}
Furthermore, since $F$ is Hausdorff $C$-u.s.c. at $\bar x$,
$\phi$ turns out to be continuous at $\bar x$. As a result, there exists
$\delta_2>0$ such that
\begin{equation}    \label{in:phicontdelta2}
  \phi(x)-\phi(\bar x)=\phi(x)\le {\delta_1\over 2},\quad\forall
  x\in\ball{\bar x}{\delta_2}.
\end{equation}
Define
$$
  \delta_\alpha={1\over 4}\min\{\delta_0,\,\delta_1,\,\delta_2\}.
$$
To show the thesis, fix an arbitrary $x_0\in [\ball{\bar x}{\delta_\alpha}
\cap S]\backslash\Solv{\IGE}$, so $\phi(x_0)>0$.
Notice that, with the above choice of the value of $\delta_\alpha$, it is
$$
  \ball{x_0}{\delta_\alpha}\cap S\subseteq\ball{\bar x}{2\delta_\alpha}
  \subseteq\ball{\bar x}{\delta_0/2}.
$$
Thus $\phi$ is l.s.c. on the closed set $\ball{x_0}{\delta_\alpha}\cap S$
and obviously bounded from below. As it is clearly $\phi(x_0)\le\inf_{x\in
\ball{x_0}{\delta_\alpha}\cap S}\phi(x)+\phi(x_0)$, it is possible to invoke the
Ekeland Variational Principle. Accordingly, corresponding to
$\lambda={\phi(x_0)\over \alpha-1}$, there exists $x_\lambda\in
\ball{x_0}{\delta_\alpha}\cap S$ with the following properties:
\begin{equation}      \label{in:EVPloc1}
    \phi(x_\lambda)\le\phi(x_0)-(\alpha-1)d(x_\lambda,x_0),
\end{equation}
\begin{equation}     \label{in:EVPloc2}
    d(x_\lambda,x_0)\le{\phi(x_0)\over \alpha-1}
\end{equation}
and
\begin{equation}      \label{in:EVPloc3}
   \phi(x_\lambda)<\phi(x)+(\alpha-1)d(x,x_\lambda),\quad\forall
   x\in[\ball{x_0}{\delta_\alpha}\cap S]\backslash\{x_\lambda\}.
\end{equation}
If $\phi(x_\lambda)=0$, then $x_\lambda\in\Solv{\IGE}$, so one
obtains on account of inequality $(\ref{in:EVPloc2})$
$$
   \dist{x_0}{\Solv{\IGE}}\le d(x_0,x_\lambda)\le{\exc{F(x_0)}{C}\over\alpha-1}.
$$
If $\phi(x_\lambda)>0$, since $x_\lambda\in\ball{\bar x}{\delta_2}$
because $\ball{x_0}{\delta_\alpha}\subseteq\ball{\bar x}{2\delta_\alpha}
\subseteq\ball{\bar x}{\delta_2/2}$,
one gets as a consequence of inequalities $(\ref{in:EVPloc1})$ and
$(\ref{in:phicontdelta2})$
$$
  \phi(x_\lambda)\le\phi(x_0)\le{\delta_1\over 2}.
$$
On the other hand, it holds
$$
  d(x_\lambda,\bar x)\le d(x_\lambda,x_0)+d(x_0,\bar x)\le
  \delta_\alpha+\delta_\alpha<\delta_1.
$$
This makes it possible to invoke property $(\ref{in:hyplocincrdelta1})$,
with $x=x_\lambda$ and $r=\phi(x_\lambda)$.
Thus, there must exist $z\in\ball{x_\lambda}{\phi(x_\lambda)}\cap S$
such that
$$
  F(z)+\alpha\phi(x_\lambda)\Uball\subseteq F(x_\lambda)+C+
  \phi(x_\lambda)\Uball.
$$
Notice that, by properties of the excess recalled in Remark \ref{rem:excbehave}(i)
and (ii), it holds
\begin{eqnarray*}
  \phi(z) &=&\exc{F(z)}{C}=\exc{\ball{F(z)}{\alpha\phi(x_\lambda)}}{C}
      -\alpha\phi(x_\lambda)   \\
      & \le& \exc{\ball{F(x_\lambda)+C}{\phi(x_\lambda)}}{C}
      -\alpha\phi(x_\lambda)  \\
      &=&\exc{F(x_\lambda)+C}{C}+\phi(x_\lambda)-\alpha\phi(x_\lambda) \\
      &=&\exc{F(x_\lambda)}{C}+(1-\alpha)\phi(x_\lambda)
      =(2-\alpha)\phi(x_\lambda).
\end{eqnarray*}
As a consequence, whenever it happens that $\alpha>2$, the last
inequalities lead to an absurdum, thereby showing that it must be
$\phi(x_\lambda)=0$. So, henceforth it is possible to assume
without loss of generality that $\alpha\in (1,2]$.

In such a circumstance, the following two cases must be considered.

\framebox[1.1\width]{Case $d(z,x_0)\le\delta_\alpha$:} As it is $z\in
[\ball{x_0}{\delta_\alpha}\cap S]\backslash\{x_\lambda\}$, the
inequality $(\ref{in:EVPloc3})$ can be exploited, so, by recalling
the above estimate of $\phi(z)$, one obtains
$$
  \phi(x_\lambda)<\phi(z)+(\alpha-1)d(z,x_\lambda)\le
  (2-\alpha)\phi(x_\lambda)+(\alpha-1)d(z,x_\lambda)\le\phi(x_\lambda)
$$
which leads to an absurdum. Therefore, one is forced to conclude
that $\phi(x_\lambda)=0$.

\framebox[1.1\width]{Case $d(z,x_0)>\delta_\alpha$:} By recalling
inequalities $(\ref{in:EVPloc1})$ and $(\ref{in:EVPloc2})$, one
finds
\begin{eqnarray*}
  d(z,x_0) &\le& d(z,x_\lambda)+d(x_\lambda,x_0)\le \phi(x_0)
  -(\alpha-1)d(x_\lambda,x_0)+d(x_\lambda,x_0) \\
   &=& \phi(x_0)+(2-\alpha)d(x_\lambda,x_0)\le\phi(x_0)+
   {2-\alpha\over \alpha-1}\phi(x_0)
   ={\phi(x_0)\over \alpha-1}.
\end{eqnarray*}
Since it is $\bar x\in\Solv{\IGE}$, it follows
$$
  \dist{x_0}{\Solv{F,C}}\le d(x_0,\bar x)\le\delta_\alpha
  \le{\phi(x_0)\over \alpha-1}.
$$
This completes the proof.
\end{proof}

\begin{example}[Error bound failure]
Consider the set-valued mapping $F:\R\rightrightarrows\R$ defined by
$$
  F(x)=[-x^2,+\infty),
$$
and take $S=\R$, $C=\R_+$ and $\bar x=0$. With these data, the resulting
$(\IGE)$ evidently admits $\{0\}$ has a solution set. Therefore, one has
$$
  \dist{x}{\Solv{\IGE}}=|x|,\quad\forall x\in\R.
$$
On the other hand, one sees that it is
$$
  \exc{F(x)}{\R_+}=x^2,\quad\forall x\in\R.
$$
As a consequence, for any $\alpha>1$, the error bound inequality
$$
  \dist{x}{\Solv{\IGE}}=|x|\le {x^2\over\alpha-1}=
  {\exc{F(x)}{\R_+}\over\alpha-1}
$$
fails to hold in any interval $(-\delta_\alpha,\delta_\alpha)$,
whatever the value of $\delta_\alpha>0$ is. Observe that
$F$ is both l.s.c. in a neighbourhood of $0$ and Hausdorff
$\R_+$-u.s.c. at $0$, so hypothesis (i) of Theorem \ref{thm:erbometincr}
is fulfilled. Instead, $F$ is not metrically $\R_+$-increasing around $0$,
relative to $\R$ (in other terms, locally metrically $\R_+$-increasing
around $0$).
The present example thus illustrates the essential role played by the metric
$C$-increase property for the validity of the error bound $(\ref{in:locerboCincr})$.
\end{example}

The main result of the paper, about a tangential approximation of
$\Solv{\IGE}$ near one of its elements, is established below.

\begin{theorem}[Inner tangential approximation under $C$-increase]    \label{thm:itanapproxsol}
With reference to problem $(\IGE)$, let $\bar x\in\Solv{\IGE}$. Suppose
that:

(i) $F$ is l.s.c. in a neighbourhood of $\bar x$ and Hausdorff $C$-u.s.c.
at $\bar x$;

(ii) $F$ is metrically $C$-increasing around $\bar x$ relative to $S$;

(iii) $F$ admits $H:\R^n\rightrightarrows\R^m$ as an outer prederivative
at $\bar x$.

\noindent Then, the following inclusion holds
\begin{equation}     \label{in:itanapproxsol}
  H^\upinv(C)\cap\WIang{S}{\bar x}\subseteq \Tang{\Solv{\IGE}}{\bar x}.
\end{equation}
If, in addition,

(iv) the outer prederivative $H$ of $F$ at $\bar x$ is Lipschitz,

\noindent the following stronger inclusion holds
\begin{equation}     \label{in:tangapproxsol}
  H^\upinv(C)\cap\Tang{S}{\bar x}\subseteq \Tang{\Solv{\IGE}}{\bar x}.
\end{equation}
\end{theorem}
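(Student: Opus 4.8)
The plan is to verify both inclusions through the Dini-derivative characterization of the contingent cone recalled in Remark \ref{rem:charcontcone}: fixing $v$ in the relevant left-hand intersection, it suffices to show
$$
  \liminf_{t\downarrow 0}\frac{\dist{\bar x+tv}{\Solv{\IGE}}}{t}=0.
$$
The engine driving this is the local error bound of Theorem \ref{thm:erbometincr}, which under hypotheses (i)--(ii) converts a bound on the excess $\exc{F(x)}{C}$ into a bound on $\dist{x}{\Solv{\IGE}}$ for $x$ near $\bar x$ in $S$. Thus the whole argument reduces to producing feasible points $x$ close to $\bar x$ along which $\exc{F(x)}{C}$ is of order $o(|x-\bar x|)$, and the outer prederivative $H$ is exactly the device that supplies such an estimate. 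The case $v=\nullv$ is trivial, since $\nullv$ always lies in the contingent cone, so assume $v\ne\nullv$ throughout.

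For the first inclusion, take $v\in H^\upinv(C)\cap\WIang{S}{\bar x}$ and fix $\alpha\in(1,\incr{F}{S}{\bar x})$, obtaining $\delta_\alpha$ from Theorem \ref{thm:erbometincr}. Given $\epsilon>0$, hypothesis (iii) supplies $\delta_\epsilon$ with $F(x)\subseteq F(\bar x)+H(x-\bar x)+\epsilon|x-\bar x|\Uball$ for $x\in\ball{\bar x}{\delta_\epsilon}$. Because $v\in\WIang{S}{\bar x}$, there are arbitrarily small $t>0$ with $\bar x+tv\in S$; for those with $t|v|<\min\{\delta_\alpha,\delta_\epsilon\}$ I combine positive homogeneity $H(tv)=tH(v)$ with $F(\bar x)\subseteq C$ and $H(v)\subseteq C$ (both values absorbed by the convex cone $C$) to get $F(\bar x)+tH(v)\subseteq C$, whence
$$
  F(\bar x+tv)\subseteq C+\epsilon t|v|\Uball.
$$
Invoking Remark \ref{rem:excbehave}(ii)--(iii) (invariance under conic extension, and $\exc{r\Uball}{C}\le r$) then yields $\exc{F(\bar x+tv)}{C}\le\epsilon t|v|$, and the error bound gives $\dist{\bar x+tv}{\Solv{\IGE}}/t\le\epsilon|v|/(\alpha-1)$. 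Since such $t$ are arbitrarily small, the liminf above is at most $\epsilon|v|/(\alpha-1)$; letting $\epsilon\downarrow 0$ forces it to be $0$, so $v\in\Tang{\Solv{\IGE}}{\bar x}$ by Remark \ref{rem:charcontcone}.

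For the second inclusion, take $v\in H^\upinv(C)\cap\Tang{S}{\bar x}$ and pick, from the very definition of the contingent cone, sequences $v_k\to v$ and $t_k\downarrow 0$ with $\bar x+t_kv_k\in S$. Repeating the excess computation at the feasible points $x_k=\bar x+t_kv_k$ now confronts the term $t_kH(v_k)$ rather than $t_kH(v)$, and here hypothesis (iv) is decisive: Lipschitz continuity of $H$ gives $H(v_k)\subseteq H(v)+\kappa|v_k-v|\Uball\subseteq C+\kappa|v_k-v|\Uball$, so that, after the same cone absorption and Remark \ref{rem:excbehave}(ii)--(iii), $\exc{F(x_k)}{C}\le t_k\kappa|v_k-v|+\epsilon t_k|v_k|$. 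The error bound then controls $\dist{x_k}{\Solv{\IGE}}$, and the triangle inequality passes to $\dist{\bar x+t_kv}{\Solv{\IGE}}\le\dist{x_k}{\Solv{\IGE}}+t_k|v-v_k|$; dividing by $t_k$ and sending $k\to\infty$ (so $|v_k-v|\to 0$ and $|v_k|\to|v|$) leaves the bound $\epsilon|v|/(\alpha-1)$ on the liminf, and once more $\epsilon\downarrow 0$ closes the argument.

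I expect the main obstacle to be precisely the control exploited in the last step: since the contingent cone only furnishes an approach along \emph{varying} directions $v_k$, one must keep the prederivative images $H(v_k)$ inside a neighbourhood of $C$ whose scaled radius is $o(t_k)$, so that it is absorbed by the error bound. Lipschitz continuity of $H$ delivers exactly this, which explains why hypothesis (iv) is what upgrades $\WIang{S}{\bar x}$ to the full contingent cone $\Tang{S}{\bar x}$; without it one is confined to the fixed-direction approach underlying the weak feasible direction cone. A minor bookkeeping point is to ensure $x_k\in\ball{\bar x}{\min\{\delta_\alpha,\delta_\epsilon\}}$ for large $k$, which is automatic since $x_k\to\bar x$.
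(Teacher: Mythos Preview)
Your proposal is correct and follows essentially the same route as the paper: both parts hinge on the error bound of Theorem~\ref{thm:erbometincr} combined with the outer prederivative estimate to control $\exc{F(\bar x+tv)}{C}$, and then invoke the Dini-derivative characterization of the contingent cone. The only cosmetic differences are that the paper normalizes to $|v|=1$ and pre-tunes the prederivative tolerance to absorb the factor $(\alpha-1)$, and in the second part it appeals to the Lipschitz continuity of the distance function to rewrite the liminf, whereas you achieve the same effect with an explicit triangle-inequality step; these are equivalent formulations of the same argument.
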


\begin{proof}
Take an arbitrary $v\in H^\upinv(C)\cap\WIang{S}{\bar x}$. If $v=\nullv$,
then it is obviously $v\in\Tang{\Solv{\IGE}}{\bar x}$. So, let us
suppose henceforth $v\ne\nullv$. Observe that, since
$H^\upinv(C)$, $\WIang{S}{\bar x}$ and $\Tang{\Solv{\IGE}}{\bar x}$
are all cones (remember Remark \ref{rem:Hucone}(i)),
it is possible to assume without any loss of generality that $|v|=1$.
According to the characterization of elements in the contingent cone
mentioned in Remark \ref{rem:charcontcone}, in order to prove that
$v\in\Tang{\Solv{\IGE}}{\bar x}$ it suffices to show that
\begin{equation}     \label{eq:Dlddist0}
  \liminf_{t\downarrow 0}{\dist{\bar x+tv}{\Solv{\IGE}}\over t}=
  \sup_{\tau>0}\inf_{t\in (0,\tau)}{\dist{\bar x+tv}{\Solv{\IGE}}\over t}
  =0.
\end{equation}
This means that for every $\tau>0$ and $\epsilon>0$, there must exist
$t\in (0,\tau)$ such that
\begin{equation}   \label{in:tangrapthesis}
  {\dist{\bar x+tv}{\Solv{\IGE}}\over t}\le\epsilon.
\end{equation}
So, fix positive $\tau$ and $\epsilon$. According to Theorem \ref{thm:erbometincr},
by virtue of hypotheses (i) and (ii), a local error bound for $(\IGE)$ is
in force, so corresponding to $\alpha\in (1,\incr{F}{S}{\bar x})$ there exists
$\delta_\alpha>0$ such that inequality $(\ref{in:locerboCincr})$ holds.

On the other hand, by virtue of hypothesis (iii), corresponding to $\epsilon$
there exists $\delta_\epsilon>0$ such that
\begin{equation}    \label{in:outprebarxtr}
  F(x)\subseteq F(\bar x)+H(x-\bar x)+\epsilon(\alpha-1)|x-\bar x|\Uball,
  \quad\forall x\in\ball{\bar x}{\delta_\epsilon}.
\end{equation}
Now, take $\delta_*$ in such a way that
$$
   0<\delta_*<\min\{\delta_\alpha,\, \delta_\epsilon,\, \tau\}.
$$
Since $v\in\WIang{S}{\bar x}$ there exists $t_*\in (0,\delta_*)$
with the property that $\bar x+t_*v\in S\cap\ball{\bar x}{\delta_*}$ .
As a consequence of inclusion $(\ref{in:outprebarxtr})$, taking
into account that $v\in H^\upinv(C)$, one finds
\begin{eqnarray*}
   F(\bar x+t_*v) &\subseteq& F(\bar x)+t_*H(v)+\epsilon(\alpha-1) t_*\Uball
   \subseteq C+t_*C+\epsilon(\alpha-1) t_*\Uball  \\
   &=& C+\epsilon(\alpha-1) t_*\Uball.
\end{eqnarray*}
From the last inclusion, on account of what recalled in Remark \ref{rem:excbehave}(iii),
it follows
$$
  \exc{F(\bar x+t_*v)}{C}\le\exc{C+\epsilon(\alpha-1) t_*\Uball}{C}=
  \exc{\epsilon(\alpha-1) t_*\Uball}{C}\le\epsilon(\alpha-1) t_*.
$$
Therefore, by exploiting the error bound inequality $(\ref{in:locerboCincr})$,
what is possible to do inasmuch as $\bar x+t_*v\in\ball{\bar x}{\delta_\alpha}
\cap S$, one obtains
$$
   {\dist{\bar x+t_*v}{\Solv{\IGE}}\over t_*}\le {\exc{F(\bar x+t_*v)}{C}
   \over (\alpha-1)t_*}\le\epsilon.
$$
As the last inequality shows that condition $(\ref{in:tangrapthesis})$ is
satisfied for $t=t_*\in (0,\tau)$, inclusion $(\ref{in:itanapproxsol})$
is proved.

In order to prove the second inclusion in the thesis, observe first that, since
the function $x\mapsto\dist{x}{\Solv{\IGE}}$ is Lipschitz, it holds
$$
  \liminf_{t\downarrow 0}{\dist{\bar x+tv}{\Solv{\IGE}}\over t}=
  \displaystyle\liminf_{w\to v\atop t\downarrow 0}
  {\dist{\bar x+tw}{\Solv{\IGE}}\over t}.
$$
By consequence, in order to show that if $v\in H^\upinv(C)\cap\Tang{S}{\bar x}$
then $v\in \Tang{\Solv{\IGE}}{\bar x}$ by means of the characterization in $(\ref{eq:Dlddist0})$,
it suffices to prove the existence of sequences $(v_n)_n$, with $v_n\to  v$,
and $(t_n)_n$, with $t_n\downarrow 0$, as $n\to\infty$, such that
\begin{equation}     \label{eq:limDlddist0}
  \lim_{n\to\infty}{\dist{\bar x+t_nv_n}{\Solv{\IGE}}\over t_n}=0.
\end{equation}
Again, one can assume that $|v|=1$ (the case $v=\nullv$ being trivial).
Since $v\in H^\upinv(C)\cap\Tang{S}{\bar x}$, there exist $(v_n)_n$,
with $v_n\to  v$, and $(t_n)_n$, with $t_n\downarrow 0$, such that
$\bar x+t_nv_n\in S$, for every $n\in\N$.
As a consequence of hypothesis (iv), one finds that for some $\kappa>0$
it must hold
$$
  H(v_n)\subseteq H(v)+\kappa|v_n-v|\Uball,\quad\forall n\in\N.
$$
Fix $\epsilon>0$.
Correspondingly, by the hypothesis (iii) there exists $\delta_\epsilon>0$
such that the following inclusion holds true
\begin{equation}    \label{in:outprederakeps}
  F(x)\subseteq F(\bar x)+H(x-\bar x)+|x-\bar x|
  \left({\alpha-1\over \kappa+2}\right)\epsilon\Uball,\quad
  \forall x\in\ball{\bar x}{\delta_\epsilon}.
\end{equation}
Take $\delta_*\in (0,\min\{\delta_\alpha,\, \delta_\epsilon\})$, where
$\delta_\alpha>0$ and $\alpha$ have the same meaning as in the first part of the proof
and do exist by hypotheses (i) and (ii) and by Theorem \ref{thm:erbometincr}.
Since $\bar x+t_nv_n\to\bar x$ as $n\to\infty$, there exists
$n_*\in\N$, such that
$$
  \bar x+t_nx_n\in \ball{\bar x}{\delta_*},
$$
and
$$
   |v_n-v|<{(\alpha-1)\epsilon\over\kappa+2},\qquad |v_n|<2,\quad
   \forall n\in\N, \ n\ge n_*.
$$
Thus, by recalling that $v\in H^\upinv(C)$, in the light of inclusion
$(\ref{in:outprederakeps})$, which can be used because $\bar x+t_nv_n\in
\ball{\bar x}{\delta_\epsilon}$ for every $n\ge n_*$, one obtains
\begin{eqnarray*}
  F(\bar x+t_nv_n) &\subseteq & F(\bar x)+t_n H(v_n)+t_n|v_n|\left({\alpha-1\over \kappa+2}\right)\epsilon\Uball \\
    &\subseteq & C+t_n[H(v)+\kappa|v_n-v|\Uball]+t_n|v_n|\left({\alpha-1\over \kappa+2}\right)\epsilon\Uball \\
    &\subseteq & C+t_nC+t_n{\kappa\over \kappa+2}(\alpha-1)\epsilon\Uball+t_n{2\over \kappa+2}(\alpha-1)\epsilon\Uball \\
    &=& C+t_n \left({\kappa\over \kappa+2}+{2\over \kappa+2}\right)(\alpha-1)\epsilon\Uball,\\
    &=& C+t_n(\alpha-1)\epsilon\Uball,\quad\forall  n\in\N,\ n\ge n_*.
\end{eqnarray*}
Now, by passing to the excess function, from the last inclusions one deduces
\begin{eqnarray*}
  \exc{F(\bar x+t_nv_n)}{C} &\le& \exc{C+t_n(\alpha-1)\epsilon\Uball}{C}
  \le\exc{t_n(\alpha-1)\epsilon\Uball}{C}  \\
  &\le & t_n(\alpha-1)\epsilon,\quad \forall n\in\N,\ n\ge n_*.
\end{eqnarray*}
Since $\bar x+t_nx_n\in \ball{\bar x}{\delta_*}\cap S$ for every $n\in\N$, with
$n\ge n_*$, by virtue of the error bound inequality valid in $\ball{\bar x}{\delta_\alpha}
\cap S$, it results in
$$
  {\dist{\bar x+t_nx_n}{\Solv{\IGE}}\over t_n}\le
  {\exc{F(\bar x+t_nv_n)}{C}\over (\alpha-1)t_n}\le
  \epsilon,\quad\forall  n\in\N,\ n\ge n_*.
$$
The last inequality, by arbitrariness of $\epsilon$, allows one
to conclude that equality $(\ref{eq:limDlddist0})$ holds true,
thereby completing the proof.
\end{proof}

Inclusions $(\ref{in:itanapproxsol})$ and $(\ref{in:tangapproxsol})$
provide a convenient description of (in the general case) some elements
in $\Tang{\Solv{\IGE}}{\bar x}$. Theorem \ref{thm:itanapproxsol}
ensures that, as far as working with solutions of the approximated
(actually, homogenized) set-inclusive generalized equation
$$
  \hbox{Find $x\in\WIang{S}{\bar x}$ such that } H(x)\subseteq C,
$$
one keeps within the conic (contingent) approximation of
$\Solv{\IGE}$ near $\bar x$. The reader should notice that, very
often, finding all solutions of problem $(\IGE)$ turns out to be
a hard problem. Consequently, the set $\Tang{\Solv{\IGE}}{\bar x}$ can not be
calculated explicitly. On the other hand, since $S$ and $C$ are problem
data, while the structure of $H$ is supposed to be simpler than the one
of $F$, cones $H^\upinv(C)$ and $\WIang{S}{\bar x}$, or $\Tang{S}{\bar x}$,
can be calculated more easily. This fact takes major evidence when
$H$ is a fan generated by linear mappings and, in particular, is finitely
generated (remember indeed Remark \ref{rem:Hucone}(ii)).
With such a reading, Theorem \ref{thm:itanapproxsol} can be
considered as a modern version of an implicit function theorem.

Since outer prederivatives are only one-side approximation tools, one
can not expect that any inclusion achieved through them, such as
$(\ref{in:itanapproxsol})$, could be reverted to get an equality.
A simple counterexample is discussed below.

\begin{example}[Strict inclusion may hold]
Let us consider the set-valued mapping $F:\R\rightrightarrows\R^2$
introduced in Example \ref{ex:glometCincrmap}. Take $S=\R$, $C=\R^2_+$
and $\bar x=0$. As $F$ is globally metrically $\R^2_+$-increasing, it
is metrically $\R^2_+$-increasing relative to $\R$ around $0$.
It is plain to check that $F$ is l.s.c. and Hausdorff $\R^2_+$-u.s.c.
on $\R$.  From Definition \ref{def:prederiv}(i) it follows that
the constant mapping $H:\R\rightrightarrows\R^2$ defined by $H(x)=\R^2$
for every $x\in\R$, is an outer prederivative of $F$ at $0$.
As one readily sees, it holds $\Solv{\IGE}=F^\upinv(\R^2_+)=[0,+\infty)$.
Thus, it results in $\Tang{\Solv{\IGE}}{0}=[0,+\infty)$. On the other
hand, it is clear that $H^\upinv(\R^2_+)=\varnothing$.
So, in the current case it happens
$$
  H^\upinv(\R^2_+)\cap \WIang{S}{0}=\varnothing\ne[0,+\infty)=
  \Tang{\Solv{\IGE}}{0}.
$$
Now, to work with a more reasonable approximation of $F$ at $0$,
one may consider the set-valued mapping $H:\R\rightrightarrows\R^2$,
defined by
$$
  H(x)=(x,x)+O,
$$
where $O=\{y=(y_1,y_2)\in\R^2:\ y_1y_2=0\}$. Clearly, $H$ is p.h.,
because, as $O$ is a cone, it holds
$$
  H(\lambda x)=(\lambda x,\lambda x)+O=\lambda(x,x)+\lambda O=
  \lambda[(x,x)+O]=\lambda H(x),\quad\forall \lambda>0,\
  \forall x\in\R.
$$
Moreover, since it is
$$
  F(x)\subseteq H(x),\quad\forall x\in\R,\quad\hbox{ and }\quad
  (0,0)\in F(0),
$$
one has for every $\epsilon>0$
$$
  F(x)\subseteq F(0)+H(x)\subseteq F(0)+H(x)+\epsilon|x|\Uball,
  \quad\forall x\in\R.
$$
Consequently, $H$ is an outer prederivative of $F$ at $0$, so all
the hypotheses of Theorem \ref{thm:itanapproxsol} are fulfilled.
Since
$$
  H(x)\not\subseteq\R^2_+,\quad\forall x\in\R,
$$
it happens that $H^\upinv(\R^2_+)=\varnothing$. So, again one has
$$
  H^\upinv(\R^2_+)\cap \WIang{S}{0}=\varnothing\ne[0,+\infty)=
  \Tang{\Solv{\IGE}}{0}.
$$
\end{example}

\begin{corollary}
With reference to problem $(\IGE)$, let $\bar x\in\Solv{\IGE}$. Suppose
that:

(i) $F$ is l.s.c. in a neighbourhood of $\bar x$ and Hausdorff $C$-u.s.c.
at $\bar x$;

(ii) $F$ admits a strict prederivative $H:\R^n\rightrightarrows\R^m$
at $\bar x$;

(iii) there exist $\eta>0$ and $\delta>0$ such that
\begin{equation*}
   \forall x\in\ball{\bar x}{\delta}\cap S\
   \exists u\in\Uball\cap\WIang{S}{x}:\ H(u)+\eta\Uball\subseteq C.
\end{equation*}
\noindent Then, inclusion $(\ref{in:itanapproxsol})$ holds true. Moreover,
if $H$ is Lipschitz, inclusion $(\ref{in:tangapproxsol})$ holds.
\end{corollary}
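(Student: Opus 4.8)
The plan is to obtain this corollary by chaining together Proposition \ref{pro:metincrsopred} and Theorem \ref{thm:itanapproxsol}, whose hypotheses have been arranged precisely so as to dovetail here. The single point deserving an explicit (and elementary) remark is that a strict prederivative is automatically an outer prederivative.

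First I would check that hypotheses (ii) and (iii) of the corollary reproduce verbatim hypotheses (i) and (ii) of Proposition \ref{pro:metincrsopred}: the strict prederivative $H$ is the same object, and condition (iii) is exactly inclusion $(\ref{in:locsufcondmCincr})$. Proposition \ref{pro:metincrsopred} then applies and yields that $F$ is metrically $C$-increasing around $\bar x$ relative to $S$, with $\incr{F}{S}{\bar x}\ge\eta+1$.

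Next I would record that every strict prederivative is an outer prederivative. This is immediate from Definition \ref{def:strictprederiv} upon specializing the pair $(x_1,x_2)$ to $(\bar x,x)$: for each $\epsilon>0$ the radius $\delta>0$ furnished by strictness gives $F(x)\subseteq F(\bar x)+H(x-\bar x)+\epsilon|x-\bar x|\Uball$ for all $x\in\ball{\bar x}{\delta}$, which is precisely the defining inclusion in Definition \ref{def:prederiv}(i).

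With these facts secured, all the hypotheses of Theorem \ref{thm:itanapproxsol} are in force: its (i) is the corollary's (i); its (ii) is supplied by the application of Proposition \ref{pro:metincrsopred}; and its (iii) holds because $H$, being strict, is an outer prederivative of $F$ at $\bar x$. Theorem \ref{thm:itanapproxsol} thus delivers inclusion $(\ref{in:itanapproxsol})$. If in addition $H$ is Lipschitz, hypothesis (iv) of that theorem is met as well, and the stronger inclusion $(\ref{in:tangapproxsol})$ follows. I do not anticipate any genuine obstacle: the corollary is a bookkeeping synthesis of results already established, the only substantive (though trivial) step being the implication from strict to outer prederivative.
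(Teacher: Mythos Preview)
Your proposal is correct and follows essentially the same approach as the paper: invoke Proposition \ref{pro:metincrsopred} via hypotheses (ii)--(iii) to obtain the metric $C$-increase, then apply Theorem \ref{thm:itanapproxsol}. Your explicit remark that a strict prederivative is in particular an outer prederivative (by specializing $x_1=\bar x$) is a helpful clarification that the paper leaves implicit.
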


\begin{proof}
In the light of Proposition \ref{pro:metincrsopred}, by hypotheses
(ii) and (iii) $F$ turns out to be metrically $C$-increasing around
$\bar x$, relative to $S$. Then, it suffices to apply Theorem
\ref{thm:itanapproxsol}.
\end{proof}

Besides an inner tangential approximation  of the solution set
to $(\IGE)$, already useful in applications to optimization (see
Section \ref{Sect4}), it seems to be worthwhile to consider also an
outer tangential approximation of this set. In doing so, the following
remark is relevant.

\begin{remark}     \label{rem:FbarxiintC}
Under the assumption that $F$ is Hausdorff $C$-u.s.c. at $\bar x$,
which seems to be reasonable for the problem at the issue, if there exists
$\eta>0$ such that $F(\bar x)+\eta\Uball\subseteq C$ (strong satisfaction
of the set-inclusion), then one has $\bar x\in\inte F^\upinv(C)$. Indeed,
corresponding with $\eta$, there exists $\delta>0$ such that
$$
   F(x)\subseteq F(\bar x)+C+\eta\Uball\subseteq C+C=C,\quad\forall
   x\in\ball{\bar x}{\delta}.
$$
Therefore, whenever it happens that $\bar x\in\inte S$, one obtains
$$
  \bar x\in\inte F^\upinv(C)\cap\inte S=\inte\Solv{\IGE}.
$$
Consequently, it results in
$$
 \Tang{\Solv{\IGE}}{\bar x}=\R^n.
$$
In such a circumstance, an outer description of the contingent cone
loses interest.
\end{remark}

In the light of Remark \ref{rem:FbarxiintC}, the below analysis is focussed
on the case $F(\bar x)\cap\bd C\ne\varnothing$. Such a choice leaves out
the case $F(\bar x)\subseteq\inte C$, whenever $F(\bar x)$ is not a
compact set, which is a more general circumstance than that considered
in Remark \ref{rem:FbarxiintC} (strong satisfaction of the set-inclusion).

\begin{theorem}[Outer tangential approximation by fans]    \label{thm:otanapproxsol}
With reference to problem $(\IGE)$, let $\bar x\in\Solv{\IGE}$. Suppose
that:

(i) $F(\bar x)\cap\bd C\ne\varnothing$;

(ii) $F$ admits an inner prederivative $H:\R^n\rightrightarrows\R^m$ at $\bar x$;

(iii) $H$ is a fan generated by a bounded set ${\mathcal G}\subseteq\Lin(\R^n,\R^m)$.

\noindent Then, it holds
$$
  \Tang{\Solv{\IGE}}{\bar x}\subseteq\left[\bigcap_{y\in F(\bar x)\cap\bd C}
  H^\upinv(\Tang{C}{y})\right]\cap\Tang{S}{\bar x}.
$$
\end{theorem}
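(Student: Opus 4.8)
The plan is to prove the two set-inclusions separately, since the right-hand side is an intersection. The inclusion $\Tang{\Solv{\IGE}}{\bar x}\subseteq\Tang{S}{\bar x}$ is immediate from the monotonicity of the contingent cone with respect to set inclusion: as $\Solv{\IGE}\subseteq S$, any $v$ realized by a sequence $\bar x+t_nv_n\in\Solv{\IGE}$ with $t_n\downarrow 0$ and $v_n\to v$ is automatically realized in $S$. Hence the whole effort concentrates on showing $\Tang{\Solv{\IGE}}{\bar x}\subseteq H^\upinv(\Tang{C}{y})$ for each fixed $y\in F(\bar x)\cap\bd C$, which amounts to proving $H(v)\subseteq\Tang{C}{y}$ whenever $v\in\Tang{\Solv{\IGE}}{\bar x}$.

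So I would fix $v\in\Tang{\Solv{\IGE}}{\bar x}$ and, by the sequential definition of the contingent cone, pick $v_n\to v$ and $t_n\downarrow 0$ with $x_n:=\bar x+t_nv_n\in\Solv{\IGE}$; in particular $F(x_n)\subseteq C$ for every $n$. Then fix $y\in F(\bar x)\cap\bd C$ and an arbitrary $w\in H(v)$, the goal being $w\in\Tang{C}{y}$. The engine is the inner prederivative: given $\epsilon>0$ there is $\delta_\epsilon>0$ with $F(\bar x)+H(x-\bar x)\subseteq F(x)+\epsilon|x-\bar x|\Uball$ on $\ball{\bar x}{\delta_\epsilon}$. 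Since $x_n\to\bar x$, for $n$ large one has $x_n\in\ball{\bar x}{\delta_\epsilon}$, and using $y\in F(\bar x)$, the positive homogeneity $H(t_nv_n)=t_nH(v_n)$, and $F(x_n)\subseteq C$, I obtain $y+t_nH(v_n)\subseteq C+\epsilon t_n|v_n|\Uball$.

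The remaining step converts this information on $H(v_n)$ into information on $H(v)$, and here hypothesis (iii) is essential: by Remark \ref{rem:Hucone}(iii) the fan $H$ is Lipschitz, say with constant $l$, so $\exc{H(v)}{H(v_n)}\le l|v-v_n|$ and one may choose $w_n\in H(v_n)$ with $|w-w_n|\le l|v-v_n|$. Then $y+t_nw_n\in C+\epsilon t_n|v_n|\Uball$ yields $\dist{y+t_nw}{C}\le\epsilon t_n|v_n|+l t_n|v-v_n|$, that is $\dist{y+t_nw}{C}/t_n\le\epsilon|v_n|+l|v-v_n|$ for all large $n$. The delicate point — the one to carry out with care — is the order of the quantifiers: for each fixed $\epsilon$ the bound holds only eventually, so I first let $n\to\infty$ (using $v_n\to v$, hence $|v_n|\to|v|$) to get $\limsup_n\dist{y+t_nw}{C}/t_n\le\epsilon|v|$, and then let $\epsilon\downarrow 0$. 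This forces $\lim_n\dist{y+t_nw}{C}/t_n=0$, whence $\liminf_{t\downarrow 0}\dist{y+tw}{C}/t=0$. By the distance characterization of the contingent cone in Remark \ref{rem:charcontcone}, $w\in\Tang{C}{y}$.

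Since $w\in H(v)$ was arbitrary, $H(v)\subseteq\Tang{C}{y}$, i.e. $v\in H^\upinv(\Tang{C}{y})$; since $y\in F(\bar x)\cap\bd C$ was arbitrary, $v$ lies in the intersection over all such $y$, and together with $v\in\Tang{S}{\bar x}$ this gives the claim. I expect the main obstacle to be purely this limit-interchange bookkeeping rather than anything structural. A secondary point worth flagging is the role of $\bd C$: the identical argument runs for any $y\in F(\bar x)$, but for $y\in\inte C$ one has $\Tang{C}{y}=\R^m$ and $H^\upinv(\Tang{C}{y})=\R^n$, so only the boundary points contribute nontrivially — which is exactly why hypothesis (i) is assumed and why the restriction to $F(\bar x)\cap\bd C$ appears in the statement.
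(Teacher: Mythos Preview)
Your proof is correct and follows essentially the same route as the paper: pick sequences from the contingent cone, use the inner prederivative to get $y+t_nH(v_n)\subseteq C+\epsilon t_n|v_n|\Uball$, and transfer this to $H(v)$ via the Lipschitz property of the boundedly generated fan. The only difference is in the endgame: the paper shows $H(v)\subseteq (C-y)/t_n+3\epsilon\Uball$, extracts a convergent subsequence in $\Uball$ to land each $w$ in $\Tang{C}{y}+3\epsilon\Uball$, and then invokes Remark~\ref{rem:vectprops}(iv), whereas you go pointwise with $w_n\in H(v_n)$ and apply the distance characterization of Remark~\ref{rem:charcontcone} directly --- a slightly leaner variant of the same argument.
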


\begin{proof}
It is clear that
$$
  \nullv\in\left[\bigcap_{y\in F(\bar x)\cap\bd C}
  H^\upinv(\Tang{C}{y})\right]\cap\Tang{S}{\bar x}.
$$
Indeed, $\nullv\in\Tang{S}{\bar x}$ and, since $H$ is generated by linear
mappings, $H(\nullv)=\{\nullv\}$, with the consequence that $\nullv\in
H^\upinv(\Tang{C}{y})$, for every $y\in F(\bar x)\cap\bd C$.
Now, let $v\ne\nullv$ be an arbitrary element of $\Tang{\Solv{\IGE}}{\bar x}$.
As already done above, in consideration of the conical nature of all involved
sets, it is possible to assume that $|v|=1$. Then, there exist $(v_n)_n$, with
$v_n\to v$, and $(t_n)_n$, with $t_n\downarrow 0$, as $n\to\infty$, such that
$\bar x+t_nv_n\in\Solv{\IGE}=F^\upinv(C)\cap S$, for every $n\in\N$.
This fact immediately implies that $v\in\Tang{S}{\bar x}$.
By virtue of hypothesis (ii), for every $\epsilon>0$ there exists $\delta_\epsilon>0$
such that
\begin{equation}     \label{in:useinprederx}
  F(\bar x)+H(x-\bar x)\subseteq F(x)+\epsilon|x-\bar x|\Uball,\quad
  \forall x\in\ball{\bar x}{\delta_\epsilon}.
\end{equation}
According to what was noted in Remark \ref{rem:Hucone}(iii), since
$H$ is generated by a bounded set it is Lipschitz, so there exists
$\kappa>0$ such that
\begin{equation}     \label{in:Hlipkappa}
   H(x_1)\subseteq H(x_2)+\kappa|x_1-x_2|\Uball,\quad\forall
   x_1,\, x_2\in\R^n.
\end{equation}
Fix an element $\bar y\in F(\bar x)\cap\bd C$ and $\epsilon>0$. Without any
loss of generality, it is possible to assume that
$$
  \delta_\epsilon<{\epsilon\over\kappa}.
$$
Since the sequence
$(\bar x+t_nv_n)_n$ converges to $\bar x$, starting with a proper $n_*\in\N$
it must be $\bar x+t_nv_n\in\ball{\bar x}{\delta_\epsilon}$ for every
$n\ge n_*$. Thus, from inclusion $(\ref{in:useinprederx})$ it follows
$$
  \bar y+t_nH(v_n)\subseteq F(\bar x+t_nv_n)+\epsilon t_n|v_n|\Uball,
  \quad\forall n\ge n_*,
$$
whence, by recalling that $\bar x+t_nv_n\in\Solv{\IGE}$, one gets
$$
  t_nH(v_n)\subseteq F(\bar x+t_nv_n)-\bar y+\epsilon t_n|v_n|\Uball
  \subseteq C-\bar y+\epsilon t_n|v_n|\Uball,\quad\forall n\ge n_*.
$$
Since it is $v_n\to v$, by increasing, if needed, the value of $n_*$,
it is
$$
   |v_n|<2 \qquad\hbox{ and }\qquad |v-v_n|<{\epsilon\over\kappa},
   \quad\forall n\ge n_*.
$$
From the last inclusion, one obtains
\begin{equation}    \label{in:HvnCepsB}
   H(v_n)\subseteq {C-\bar y\over t_n}+2\epsilon\Uball,\quad
   \forall n\ge n_*.
\end{equation}
By recalling inclusion $(\ref{in:Hlipkappa})$, one deduces
$$
  H(v)\subseteq H(v_n)+\kappa|v-v_n|\Uball\subseteq H(v_n)
  +\epsilon\Uball,\quad\forall n\ge n_*.
$$
On account of $(\ref{in:HvnCepsB})$, the last obtained inclusion gives
$$
  H(v)\subseteq {C-\bar y\over t_n}+3\epsilon\Uball,
  \quad\forall n\ge n_*.
$$
According to this, for each $w\in H(v)$ there exist sequences
$(c_n)_n$, with $c_n\in C$, and $(b_n)_n$, with $b_n\in\Uball$,
such that
$$
   w={c_n-\bar y\over t_n}+3\epsilon b_n, \quad\forall n\ge n_*.
$$
Since, up to a sequence relabeling, it is $b_n\longrightarrow b
\in\Uball$, for some $b\in\Uball$ as $n\to\infty$, $\Uball$ being
compact, it must result in
$$
  z_n={c_n-\bar y\over t_n}\longrightarrow z\in\Tang{C}{\bar y}
  \hbox{ as }  n\to\infty.
$$
As it is $\bar y+t_nz_n\in C$ for every $n\ge n_*$, this
means that $w\in\Tang{C}{\bar y}+3\epsilon\Uball$. By arbitrariness
of $w\in H(v)$, the above argument shows that
\begin{equation}      \label{in:Htangeps}
   H(v)\subseteq \Tang{C}{\bar y}+3\epsilon\Uball.
\end{equation}
Since $H(v)$ is a closed set, $\Tang{C}{\bar y}$ is a closed cone and
inclusion $(\ref{in:Htangeps})$ remains true for every $\epsilon>0$
(notice, indeed, that $v$ has been fixed before fixing $\epsilon$), then according
to what noted in Remark \ref{rem:vectprops}(iv) it is possible to
assert that $H(v)\subseteq \Tang{C}{\bar y}$, or, equivalently,
$v\in H^\upinv(\Tang{C}{\bar y})$. By arbitrariness
of $\bar y\in F(\bar x)\cap\bd C$, the above argument shows that
$$
  v\in\bigcap_{y\in F(\bar x)\cap\bd C}
  H^\upinv(\Tang{C}{y}),
$$
and thereby allows one to conclude that $\Tang{\Solv{\IGE}}{\bar x}\subseteq
\bigcap_{y\in F(\bar x)\cap\bd C}H^\upinv(\Tang{C}{y})$.
The proof is complete.
\end{proof}

In the special case in which $\nullv\in F(\bar x)$, by exploiting the bilateral
approximation of a set-valued mapping provided by prederivatives, one can
achieve the following characterization on the contingent cone to the
solution set of a $(\IGE)$.

\begin{theorem}[Tangential approximation of $\Solv{\IGE}$]     \label{thm:tanapproxsol}
With reference to problem $(\IGE)$, let $\bar x\in\Solv{\IGE}$. Suppose
that:

(i) $F$ is l.s.c. in a neighbourhood of $\bar x$ and Hausdorff $C$-u.s.c.
at $\bar x$;

(ii) $F$ is metrically $C$-increasing around $\bar x$ relative to $S$;

(iii) $F$ admits a prederivative $H:\R^n\rightrightarrows\R^m$ at $\bar x$;

(iv) $H$ is a fan  generated by a bounded set;

(v) $\nullv\in F(\bar x)$.

\noindent Then, the following equality holds
\begin{equation}    \label{eq:tanapproxsol}
  \Tang{\Solv{\IGE}}{\bar x}=H^\upinv(C)\cap\Tang{S}{\bar x}.
\end{equation}
\end{theorem}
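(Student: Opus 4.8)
The plan is to prove the equality \eqref by establishing the two inclusions separately, each of which turns out to be essentially a specialization of one of the two preceding theorems. The only genuinely new ingredient is hypothesis (v): together with the conical structure of $C$, it is exactly what forces the outer estimate of Theorem \ref{thm:otanapproxsol} to collapse onto $H^\upinv(C)$, thereby matching the inner estimate. So the whole proof is a matter of verifying hypotheses and performing one elementary reduction.

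For the inclusion $H^\upinv(C)\cap\Tang{S}{\bar x}\subseteq\Tang{\Solv{\IGE}}{\bar x}$ I would invoke Theorem \ref{thm:itanapproxsol}. Its hypotheses (i) and (ii) coincide verbatim with the present (i) and (ii). Since $H$ is here a genuine prederivative, it is in particular an \emph{outer} prederivative, so hypothesis (iii) of that theorem is met; and since $H$ is a fan generated by a bounded set, Remark \ref{rem:Hucone}(iii) guarantees that $H$ is Lipschitz, which is hypothesis (iv). Hence the stronger inclusion $(\ref{in:tangapproxsol})$ applies and yields precisely this direction.

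For the reverse inclusion $\Tang{\Solv{\IGE}}{\bar x}\subseteq H^\upinv(C)\cap\Tang{S}{\bar x}$ I would appeal to Theorem \ref{thm:otanapproxsol}. Its hypothesis (ii) holds because a prederivative is in particular an \emph{inner} prederivative, and its hypothesis (iii) is the present (iv). The decisive point is hypothesis (i): by (v) one has $\nullv\in F(\bar x)$, while pointedness of the closed convex cone $C$ forces $\nullv\in\bd C$ (were $\nullv\in\inte C$, the cone would contain a ball about the origin and, being a cone, would equal $\R^m$, contradicting pointedness for $m\ge 1$). Thus $\nullv\in F(\bar x)\cap\bd C$, so this set is nonempty and Theorem \ref{thm:otanapproxsol} gives
$$
  \Tang{\Solv{\IGE}}{\bar x}\subseteq\left[\bigcap_{y\in F(\bar x)\cap\bd C}H^\upinv(\Tang{C}{y})\right]\cap\Tang{S}{\bar x}.
$$

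It then remains to simplify the intersection. Since $\nullv$ is one of the admissible indices $y$, the intersection over $y\in F(\bar x)\cap\bd C$ is contained in the single term $H^\upinv(\Tang{C}{\nullv})$; and because $C$ is a closed convex cone one has $\Tang{C}{\nullv}=C$, so that term is exactly $H^\upinv(C)$. This yields $\Tang{\Solv{\IGE}}{\bar x}\subseteq H^\upinv(C)\cap\Tang{S}{\bar x}$, and combining with the first inclusion gives $(\ref{eq:tanapproxsol})$. I do not anticipate a substantive obstacle: the two heavy inclusions have already been carried out in the cited theorems, and the only point requiring care — the ``hard part'' such as it is — is recognizing the pair of elementary facts $\nullv\in\bd C$ and $\Tang{C}{\nullv}=C$, which is precisely what licenses the reduction of the cumbersome intersection to the clean term $H^\upinv(C)$.
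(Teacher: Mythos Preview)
Your proposal is correct and follows essentially the same route as the paper: both inclusions are obtained by invoking Theorems \ref{thm:itanapproxsol} and \ref{thm:otanapproxsol} respectively, with hypothesis (v) used exactly as you describe to single out the index $y=\nullv$ and reduce $H^\upinv(\Tang{C}{\nullv})$ to $H^\upinv(C)$. Your write-up is in fact slightly more explicit than the paper's, in that you justify $\nullv\in\bd C$ via pointedness, whereas the paper asserts this without comment.
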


\begin{proof}
Under the above hypotheses one can invoke Theorem \ref{thm:itanapproxsol}.
In doing so, as $H$ is generated by a bounded set of linear mappings,
it is a Lipschitz  outer prederivative of $F$ at $\bar x$. Consequently,
inclusion $(\ref{in:tangapproxsol})$ must hold true.

On the other hand, the hypotheses in force allows one to apply Theorem
\ref{thm:otanapproxsol}. Thus, since $\nullv\in F(\bar x)\cap\bd C$
and $\Tang{C}{\nullv}=C$, one finds
$$
  \Tang{\Solv{\IGE}}{\bar x}\subseteq H^\upinv(\Tang{C}{\nullv})
  \cap\Tang{S}{\bar x}=H^\upinv(C)\cap\Tang{S}{\bar x}.
$$
The last inclusion, along with $(\ref{in:tangapproxsol})$, certifies that
the equality in the assertion is true.
\end{proof}

It is reasonable to expect that, owing to the local nature of the
contingent tangential approximation, in the case $\bar x\in\inte S$
the presence of $S$ does not affect the representation of
$\Tang{\Solv{\IGE}}{\bar x}$. This is established below.

\begin{corollary}
Under the hypotheses of Theorem \ref{thm:tanapproxsol}, suppose that
$\bar x\in\inte S$. Then, it holds
\begin{equation}    \label{eq:tangapproxsolinte}
  \Tang{F^\upinv(C)}{\bar x}=\Tang{\Solv{\IGE}}{\bar x}=H^\upinv(C).
\end{equation}
\end{corollary}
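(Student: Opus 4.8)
The plan is to read off both equalities directly from Theorem \ref{thm:tanapproxsol}, combining its representation of the contingent cone with the extra hypothesis $\bar x\in\inte S$ and the purely local character of the contingent cone.

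First I would invoke Theorem \ref{thm:tanapproxsol}, whose hypotheses are assumed to be in force, to obtain the representation
$$
  \Tang{\Solv{\IGE}}{\bar x}=H^\upinv(C)\cap\Tang{S}{\bar x}.
$$
Since $\bar x\in\inte S$, there exists $r>0$ such that $\ball{\bar x}{r}\subseteq S$. Consequently, for every $v\in\R^n$ and every sufficiently small $t>0$ one has $\bar x+tv\in S$, so that $v\in\Iang{S}{\bar x}\subseteq\Tang{S}{\bar x}$; this forces $\Tang{S}{\bar x}=\R^n$. Substituting into the representation above immediately yields $\Tang{\Solv{\IGE}}{\bar x}=H^\upinv(C)$, which is the second equality in $(\ref{eq:tangapproxsolinte})$.

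To establish the first equality $\Tang{F^\upinv(C)}{\bar x}=\Tang{\Solv{\IGE}}{\bar x}$, I would rely on the fact, recorded in $(\ref{eq:locbehaTang})$, that the contingent cone is determined only by the local geometry of a set near the reference point. Indeed, with the same $r>0$ as above, the inclusion $\ball{\bar x}{r}\subseteq S$ gives
$$
  \Solv{\IGE}\cap\ball{\bar x}{r}=S\cap F^\upinv(C)\cap\ball{\bar x}{r}
  =F^\upinv(C)\cap\ball{\bar x}{r},
$$
since the recollection $\Solv{\IGE}=S\cap F^\upinv(C)$ is in force. Applying $(\ref{eq:locbehaTang})$ to both $F^\upinv(C)$ and $\Solv{\IGE}$ at $\bar x$ then yields
$$
  \Tang{F^\upinv(C)}{\bar x}=\Tang{F^\upinv(C)\cap\ball{\bar x}{r}}{\bar x}
  =\Tang{\Solv{\IGE}\cap\ball{\bar x}{r}}{\bar x}=\Tang{\Solv{\IGE}}{\bar x},
$$
which completes the chain of equalities.

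As for the difficulty, the corollary is essentially a direct specialization of Theorem \ref{thm:tanapproxsol} and presents no genuine obstacle. The only point deserving a line of care is the verification that $\bar x\in\inte S$ forces $\Tang{S}{\bar x}=\R^n$, but this is immediate from the definition of the feasible direction cone once a ball centred at $\bar x$ is known to lie in $S$.
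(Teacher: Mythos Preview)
Your argument is correct and follows essentially the same approach as the paper: both proofs use that $\bar x\in\inte S$ gives $\Tang{S}{\bar x}=\R^n$, whence Theorem \ref{thm:tanapproxsol} yields the second equality, and both invoke the local character $(\ref{eq:locbehaTang})$ of the contingent cone together with $\Solv{\IGE}\cap\ball{\bar x}{r}=F^\upinv(C)\cap\ball{\bar x}{r}$ to obtain the first. The only difference is the order in which the two equalities are established.
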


\begin{proof}
Since $\bar x$ is an interior point of $S$, there exists $\delta_0>0$
such that $\ball{\bar x}{\delta_0}\subseteq S$. By taking into
account equality $(\ref{eq:locbehaTang})$, one obtains
\begin{eqnarray*}
  \Tang{\Solv{\IGE}}{\bar x} &=& \Tang{S\cap F^\upinv(C)}{\bar x}=
  \Tang{(S\cap F^\upinv(C))\cap\ball{\bar x}{\delta_0}}{\bar x}  \\
  &=&\Tang{F^\upinv(C)\cap\ball{\bar x}{\delta_0}}{\bar x}=
  \Tang{F^\upinv(C)}{\bar x}.
\end{eqnarray*}
On the other hand, again by the fact that $\bar x\in\inte S$, it is
$\Tang{S}{\bar x}=\R^n$. Thus, in the current case $(\ref{eq:tanapproxsol})$
becomes $(\ref{eq:tangapproxsolinte})$.
\end{proof}

\vskip1cm


\section{Applications to constrained optimization} \label{Sect4}

In the present section, the tangential analysis of the solution set
to set-inclusive generalized equation is exploited for deriving
necessary optimality conditions. Let us focus on constrained scalar
optimization problems that can be formalized as
$$
 \min_{x\in S}\varphi(x)\quad \hbox{ subject to }\quad F(x)\subseteq C,
 \leqno (\OP)
$$
where $\varphi:\R^n\longrightarrow\R$ denotes the objective (or cost)
function, while the sets $S\subseteq\R^n$ and $C\subseteq\R^m$, and the
set-valued mapping $F:\R^n\rightrightarrows\R^m$, define a constraint
system leading to a set-inclusive generalized equation. With this format,
the feasible region of the problem is therefore ${\mathcal R}=\Solv{\IGE}=
F^\upinv(C)\cap S$. As in the previous sections, $S$ is assumed to be a
nonempty closed set whereas $C$ a closed, convex and pointed cone.

\begin{proposition}[Necessary optimality condition]    \label{pro:gennoc}
Let $\bar x\in{\mathcal R}$ be a local solution to problem $(\OP)$.
Suppose that:

(i) $F$ is l.s.c. in a neighbourhood of $\bar x$ and Hausdorff $C$-u.s.c.
at $\bar x$;

(ii) $F$ is metrically $C$-increasing around $\bar x$ relative to $S$;

(iii) $F$ admits $H:\R^n\rightrightarrows\R^m$ as an outer prederivative
at $\bar x$.

\noindent Then, the following inclusion holds
\begin{equation}   \label{in:locoptnc}
    -\FuD\varphi(\bar x)\subseteq\ndc{\left[H^\upinv(C)\cap\WIang{S}{\bar x}
    \right]}.
\end{equation}
If, in particular, $\varphi$ is differentiable at $\bar x$, condition
$(\ref{in:locoptnc})$ becomes
$$
  \nullv\in\nabla\varphi(\bar x)+\ndc{\left[H^\upinv(C)\cap\WIang{S}{\bar x}
    \right]}.
$$
\end{proposition}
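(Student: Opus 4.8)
The plan is to derive both assertions from the inner tangential approximation of $\Solv{\IGE}$ obtained in Theorem \ref{thm:itanapproxsol}, coupled with the variational (majorant) description of the Fr\'echet upper subdifferential recalled in Remark \ref{rem:varformFuD}. First I would note that hypotheses (i)--(iii) above are precisely those of Theorem \ref{thm:itanapproxsol}; applying that result and recalling that the feasible region satisfies ${\mathcal R}=\Solv{\IGE}$ yields
$$
  H^\upinv(C)\cap\WIang{S}{\bar x}\subseteq\Tang{\Solv{\IGE}}{\bar x}
  =\Tang{{\mathcal R}}{\bar x}.
$$
Hence every direction of the cone $H^\upinv(C)\cap\WIang{S}{\bar x}$ is tangent (in the contingent sense) to the feasible region at the local solution $\bar x$.

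Next I would fix an arbitrary $v\in\FuD\varphi(\bar x)$ and, by Remark \ref{rem:varformFuD}, choose a function $\sigma:\R^n\longrightarrow\R$ that is differentiable at $\bar x$, satisfies $\sigma(\bar x)=\varphi(\bar x)$ and $\nabla\sigma(\bar x)=v$, and majorizes $\varphi$ in the sense that $\varphi(x)\le\sigma(x)$ for all $x\in\R^n$. To establish $(\ref{in:locoptnc})$ it suffices to show $-v\in\ndc{[H^\upinv(C)\cap\WIang{S}{\bar x}]}$, i.e. $\langle v,d\rangle\ge 0$ for every $d\in H^\upinv(C)\cap\WIang{S}{\bar x}$. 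Fixing such a $d$ (the case $d=\nullv$ being trivial) and using the inclusion above, I would pick sequences $d_n\to d$ and $t_n\downarrow 0$ with $\bar x+t_nd_n\in{\mathcal R}$. Local minimality of $\bar x$ gives $\varphi(\bar x+t_nd_n)\ge\varphi(\bar x)$ for $n$ large, and combining this with $\varphi\le\sigma$ and $\sigma(\bar x)=\varphi(\bar x)$ produces $\sigma(\bar x+t_nd_n)-\sigma(\bar x)\ge 0$.

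The decisive step is then the limit passage: writing differentiability of $\sigma$ at $\bar x$ as $\sigma(\bar x+t_nd_n)-\sigma(\bar x)=t_n\langle v,d_n\rangle+o(t_n|d_n|)$, dividing by $t_n>0$, and using boundedness of $(|d_n|)_n$ to kill the remainder, I obtain $\langle v,d_n\rangle+o(1)\ge 0$, whence $\langle v,d\rangle\ge 0$ after letting $n\to\infty$. This yields $-v\in\ndc{[H^\upinv(C)\cap\WIang{S}{\bar x}]}$, and arbitrariness of $v$ gives $(\ref{in:locoptnc})$; the differentiable case is immediate, since then $\FuD\varphi(\bar x)=\{\nabla\varphi(\bar x)\}$, so $-\nabla\varphi(\bar x)$ lies in that dual cone, which is exactly the claimed membership. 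I expect no genuine obstacle here, as the substantive content is already absorbed into Theorem \ref{thm:itanapproxsol}; the only delicate points are employing the upper subdifferential (which is precisely what the majorization $\varphi\le\sigma$ requires) rather than the lower one, and keeping the signs straight when translating the scalar inequality $\langle v,d\rangle\ge 0$ into membership in the dual cone $\ndc{(\cdot)}$.
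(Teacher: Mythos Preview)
Your proposal is correct and follows essentially the same route as the paper: invoke Theorem \ref{thm:itanapproxsol} to embed $H^\upinv(C)\cap\WIang{S}{\bar x}$ in $\Tang{{\mathcal R}}{\bar x}$, then for each Fr\'echet upper subgradient use the smooth majorant from Remark \ref{rem:varformFuD} and pass to the limit along the tangent sequence. The only cosmetic difference is your notation ($v$ for the subgradient, $d$ for the direction) versus the paper's ($w$ and $v$, respectively).
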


\begin{proof}
By the local optimality of $\bar x$, there exists $\delta>0$ such that
$$
   \varphi(\bar x)\le\varphi(x),\quad\forall x\in{\mathcal R}
   \cap\ball{\bar x}{\delta}.
$$
Take an arbitrary $w\in\FuD\varphi(\bar x)$.
According to what recalled in Remark \ref{rem:varformFuD},
there exists $\sigma:\R^n\longrightarrow\R$, such that
\begin{equation}     \label{in:locoptsigma}
   \sigma(\bar x)=\varphi(\bar x)\le\varphi(x)\le\sigma(x),
   \quad\forall x\in{\mathcal R}\cap\ball{\bar x}{\delta},
\end{equation}
with $\nabla\sigma(\bar x)=w$. Take $v\in (H^\upinv(C)\cap\WIang{S}{\bar x})
\backslash\{\nullv\}$.
By virtue of inclusion $(\ref{in:itanapproxsol})$, that holds true because
all hypotheses of Theorem \ref{thm:itanapproxsol} are in force, there
must exist sequences $(v_n)_n$, with $v_n\to v$ and $(t_n)_n$, with
$t_n\downarrow 0$, such that $\bar x+t_nv_n\in {\mathcal R}$ for every
$n\in\N$. Since $\bar x+t_nv_n\to\bar x$ as $n\to\infty$, there exists
a proper $n_*\in\N$ such that
$$
   \bar x+t_nv_n\in {\mathcal R}\cap\ball{\bar x}{\delta},\quad
   \forall n\ge n_*,\ n\in\N.
$$
Thus, from inequality $(\ref{in:locoptsigma})$, by using the
differentiability of $\sigma$ at $\bar x$, one obtains
$$
  0\le {\sigma(\bar x+t_nv_n)-\sigma(\bar x)\over t_n}=
  \langle w,v_n\rangle+{o(|t_nv_n|)\over t_n|v_n|}\cdot|v_n|,
  \quad\forall  n\ge n_*,\ n\in\N.
$$
Take into account that, as a converging sequence, $(v_n)_n$ is
bounded. So, passing to the limit as $n\to\infty$ in the last inequality,
one finds
$$
  \langle w,v\rangle\ge 0.
$$
As this is true for every $v\in H^\upinv(C)\cap\WIang{S}{\bar x}$
(the case $v=\nullv$ being trivial), one can deduce that
$$
  -w\in\ndc{\left[H^\upinv(C)\cap\WIang{S}{\bar x}
    \right]}.
$$
By arbitrariness of $w\in\FuD\varphi(\bar x)$, the last inclusion
gives $(\ref{in:locoptnc})$. The second assertion in the thesis
trivially follows.
\end{proof}

\begin{remark}      \label{rem:constsysinte}
It is to be noted that, whenever $\bar x\in\inte S$ satisfies the
constraint system in a ``strict" way, i.e. the set-inclusive generalized
equation is strongly satisfied at $\bar x$ in the sense of Remark \ref{rem:FbarxiintC},
then under Hausdorff the upper semicontinuity assumption on $F$ one has $\bar x
\in\inte{\mathcal R}$. In such a circumstance, the local optimality of $\bar x$
clearly implies $\nullv\in\FlD\varphi(\bar x)$.
\end{remark}

The optimality condition formulated in Proposition \ref{pro:gennoc} requests
$F$ to admit an outer prederivative $H$, but does not impose specific
requirements on $H$ (all hypotheses refer indeed to $F$).
As one expects, by adding proper assumptions on the geometric structure
of $H$, along with adequate qualification conditions, it is possible
to achieve finer optimality conditions, having a stronger computational
impact. This is done in the next result.

\begin{theorem}     \label{thm:noptcond}
Let $\bar x\in{\mathcal R}$ be a local solution to problem $(\OP)$.
Suppose that:

(i) $F$ is l.s.c. in a neighbourhood of $\bar x$ and Hausdorff $C$-u.s.c.
at $\bar x$;

(ii) $F$ admits a strict prederivative $H:\R^n\rightrightarrows\R^m$
at $\bar x$;

(iii) $H$ is a fan generated by a bounded set ${\mathcal G}\subseteq
\Lin(\R^n;\R^m)$;

(iv) there exist $\eta,\, \delta>0$ such that for every $x\in\ball{\bar x}{\delta}
\cap S$ exists $u\in\Uball\cap\WIang{S}{x}:\ H(u)+\eta\Uball\subseteq C$;

(v) $S$ is locally convex around $\bar x$ and it holds $\inte\Tang{S}{\bar x}
\cap\inte\left(\displaystyle\bigcap_{\Lambda\in{\mathcal G}}\Lambda^{-1}(C)\right)
\ne\varnothing$.

\noindent Then, it holds
\begin{equation}     \label{in:fannoc}
  -\FuD\varphi(\bar x)\subseteq \ndc{\left(\bigcap_{\Lambda\in{\mathcal G}}
  \Lambda^{-1}(C)\right)}+\Ncone{S}{\bar x}.
\end{equation}
\noindent If, in particular, it is

(vi) ${\mathcal G}=\conv\{\Lambda_1,\dots,\Lambda_p\}$ and $\cap_{i=1}^p
\inte\Lambda_i^{-1}(C)\ne\varnothing$,

\noindent  then inclusion $(\ref{in:fannoc})$ becomes
\begin{equation}    \label{in:multrulenS}
   -\FuD\varphi(\bar x)\subseteq \sum_{i=1}^p\Lambda_i^\top(\ndc{C})+
   \Ncone{S}{\bar x}.
\end{equation}
\end{theorem}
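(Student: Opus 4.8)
The plan is to recognize the statement as the concrete specialization of the abstract necessary optimality condition of Proposition \ref{pro:gennoc}, and then to convert the dual-cone expression appearing there into the explicit forms \eqref{in:fannoc} and \eqref{in:multrulenS} by the dual-cone calculus of Remark \ref{rem:ndccalcul} together with the description of $H^\upinv(C)$ in Remark \ref{rem:Hucone}(ii). No new variational ingredient is required: the analytic content is entirely carried by Proposition \ref{pro:gennoc} (and, behind it, Theorem \ref{thm:itanapproxsol}), and what remains is essentially convex-cone bookkeeping.

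First I would verify that the hypotheses of Proposition \ref{pro:gennoc} are in force. A strict prederivative is in particular an outer prederivative: setting $x_1=\bar x$ in Definition \ref{def:strictprederiv} reproduces verbatim the inclusion defining an outer prederivative at $\bar x$, so (ii) supplies hypothesis (iii) of Proposition \ref{pro:gennoc}. Moreover, hypotheses (ii) and (iv) here are precisely the assumptions of Proposition \ref{pro:metincrsopred}, whence $F$ is metrically $C$-increasing around $\bar x$ relative to $S$, i.e. hypothesis (ii) of Proposition \ref{pro:gennoc}; hypothesis (i) is common to both. Applying Proposition \ref{pro:gennoc} therefore yields
\[
  -\FuD\varphi(\bar x)\subseteq\ndc{\left[H^\upinv(C)\cap\WIang{S}{\bar x}\right]}.
\]
Setting $Q=\bigcap_{\Lambda\in{\mathcal G}}\Lambda^{-1}(C)$, Remark \ref{rem:Hucone}(ii) identifies $H^\upinv(C)=Q$, a \emph{closed} convex cone, so the task reduces to proving $\ndc{[Q\cap\WIang{S}{\bar x}]}=\ndc{Q}+\Ncone{S}{\bar x}$.

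The delicate point, which I expect to be the main obstacle, is the passage from the weak feasible direction cone $\WIang{S}{\bar x}$ to the contingent cone $\Tang{S}{\bar x}$ inside the dual-cone operator, since that operator is insensitive only to closures and convexification. This is exactly where hypothesis (v) enters. Local convexity of $S$ around $\bar x$ makes $\WIang{S}{\bar x}$ a convex cone with $\cl\WIang{S}{\bar x}=\Tang{S}{\bar x}$; the qualification in (v) forces $\inte\Tang{S}{\bar x}\ne\varnothing$, hence $\dim\WIang{S}{\bar x}=n$ and $\inte\WIang{S}{\bar x}=\inte\Tang{S}{\bar x}$, so (v) reads $\inte Q\cap\inte\WIang{S}{\bar x}\ne\varnothing$. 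For two convex sets whose interiors meet, the line-segment principle gives $\cl(Q\cap\WIang{S}{\bar x})=Q\cap\cl\WIang{S}{\bar x}=Q\cap\Tang{S}{\bar x}$; this exchange of closure and intersection is the real crux of the argument. Since $\ndc{A}=\ndc{(\cl A)}$ for every set $A$, it follows that $\ndc{[Q\cap\WIang{S}{\bar x}]}=\ndc{[Q\cap\Tang{S}{\bar x}]}$. Now $Q$ and $\Tang{S}{\bar x}$ are closed convex cones satisfying $\inte Q\cap\inte\Tang{S}{\bar x}\ne\varnothing$, so formula \eqref{eq:ndcintesecteq} applies and gives $\ndc{[Q\cap\Tang{S}{\bar x}]}=\ndc{Q}+\ndc{\Tang{S}{\bar x}}$; finally $\ndc{\Tang{S}{\bar x}}=\Ncone{S}{\bar x}$ by local convexity, and substituting back the expression for $Q$ produces \eqref{in:fannoc}.

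For the refinement under (vi) I would simply specialize the algebra. When ${\mathcal G}=\conv\{\Lambda_1,\dots,\Lambda_p\}$, Remark \ref{rem:Hucone}(ii) gives $Q=\bigcap_{i=1}^p\Lambda_i^{-1}(C)$, a finite intersection of closed convex cones. The hypothesis $\bigcap_{i=1}^p\inte\Lambda_i^{-1}(C)\ne\varnothing$ furnishes a common interior point, which lets me iterate \eqref{eq:ndcintesecteq} across the intersection to obtain $\ndc{Q}=\sum_{i=1}^p\ndc{(\Lambda_i^{-1}(C))}$, and then \eqref{eq:invadj} rewrites each summand as $\ndc{(\Lambda_i^{-1}(C))}=\Lambda_i^\top(\ndc{C})$, turning \eqref{in:fannoc} into \eqref{in:multrulenS}. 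The only care needed here is that the \emph{common} interior point, rather than merely pairwise qualification, is what legitimizes the inductive application of \eqref{eq:ndcintesecteq}: since for convex sets $\inte(A\cap B)=\inte A\cap\inte B$ whenever the interiors meet, the common point survives in the interior of every partial intersection, and the induction closes.
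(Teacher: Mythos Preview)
Your proof is correct, but it takes a slightly different path from the paper's. The paper exploits hypothesis (iii) more fully: since $H$ is generated by a bounded set it is Lipschitz (Remark \ref{rem:Hucone}(iii)), so the \emph{stronger} inclusion \eqref{in:tangapproxsol} of Theorem \ref{thm:itanapproxsol} is available, giving $H^\upinv(C)\cap\Tang{S}{\bar x}\subseteq\Tang{\mathcal R}{\bar x}$ directly; repeating the variational argument of Proposition \ref{pro:gennoc} then yields $-\FuD\varphi(\bar x)\subseteq\ndc{[H^\upinv(C)\cap\Tang{S}{\bar x}]}$ with $\Tang{S}{\bar x}$ already in place, and only the splitting via \eqref{eq:ndcintesecteq} remains. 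You instead invoke Proposition \ref{pro:gennoc} as a black box, land on $\ndc{[Q\cap\WIang{S}{\bar x}]}$, and then have to exchange closure and intersection (using local convexity of $S$, the equality $\cl\WIang{S}{\bar x}=\Tang{S}{\bar x}$, and the interior qualification in (v)) to pass to $\ndc{[Q\cap\Tang{S}{\bar x}]}$. Your route is perfectly sound and has the pedagogical virtue of treating Proposition \ref{pro:gennoc} as a reusable module; the paper's route is a bit leaner because the Lipschitz property of $H$, which you never use, absorbs the $\WIang{S}{\bar x}\to\Tang{S}{\bar x}$ step upstream and spares the closure-exchange argument. From that point on the two proofs coincide.
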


\begin{proof}
Observe first that, by virtue of hypotheses (i), (ii) and (iv), according to Proposition
\ref{pro:metincrsopred} $F$ turns out to be metrically $C$-increasing around
$\bar x$, relative at $S$. Besides, by virtue of hypothesis (iii), the fan $H$ is
Lipschitz (remember Remark \ref{rem:Hucone}(iii)). Thus,  Theorem \ref{thm:itanapproxsol}
ensures that the inclusion
$$
   H^\upinv(C)\cap\Tang{S}{\bar x}\subseteq\Tang{\mathcal R}{\bar x}
$$
holds true. By reasoning exactly as in the proof of Proposition \ref{pro:gennoc},
one finds
$$
  -\FuD\varphi(\bar x)\subseteq\ndc{\left[H^\upinv(C)\cap\Tang{S}{\bar x}
    \right]}.
$$
Now, since $H^\upinv(C)=\bigcap_{\Lambda\in{\mathcal G}}\Lambda^{-1}(C)$ and
$\Tang{S}{\bar x}$ are closed cones satisfying the qualification condition
in hypothesis (v), by what recalled in Remark \ref{rem:ndccalcul} (see, in particular,
formula $(\ref{eq:ndcintesecteq})$), one obtains
$$
    -\FuD\varphi(\bar x)\subseteq\ndc{\left(\bigcap_{\Lambda\in{\mathcal G}}
  \Lambda^{-1}(C)\right)}+\ndc{\left(\Tang{S}{\bar x}\right)},
$$
which gives inclusion $(\ref{in:fannoc})$, on account of the relation between
the (negative) dual cone of the contingent cone and the normal cone.

Under the additional hypothesis (vi), the further qualification condition allows
one to exploit formula $(\ref{eq:invadj})$, in such a way to obtain
$$
   \ndc{\left(\bigcap_{\Lambda\in{\mathcal G}}\Lambda^{-1}(C)\right)}=
   \ndc{\left(\bigcap_{i=1}^p\Lambda_i^{-1}(C)\right)}=
   \sum_{i=1}^{p}\ndc{\left(\Lambda_i^{-1}(C)\right)}=
   \sum_{i=1}^{p}\Lambda_i^\top(\ndc{C}).
$$
This completes the proof.
\end{proof}

\begin{corollary}    \label{cor:mulrul}
Let $\bar x\in{\mathcal R}$ be a local solution to problem $(\OP)$.
Suppose that:

(i) $F$ is l.s.c. in a neighbourhood of $\bar x$ and Hausdorff $C$-u.s.c.
at $\bar x$;

(ii) $F$ admits a strict prederivative $H:\R^n\rightrightarrows\R^m$
at $\bar x$;

(iii) $H$ is a fan finitely-generated by a set ${\mathcal G}=\conv
\{\Lambda_1,\dots,\Lambda_p\}$, with the property $\bigcap_{i=1}^p\inte\Lambda_i^{-1}(C)
\ne\varnothing$;

(iv) there exist $\eta>0$ and $u\in\Uball$ such that $H(u)+\eta\Uball\subseteq C$;

(v) $\bar x\in\inte S$;

(vi) $\varphi$ is differentiable at $\bar x$.

\noindent Then, there exist $y_1,\dots,y_p\in\R^m$ such that
$$
   y_i\in\ndc{C},\quad\forall i=1,\dots, p
$$
and
\begin{equation}     \label{eq:multrul}
  \nabla\varphi(\bar x)+\sum_{i=1}^{p}\Lambda_i^\top(y_i)=
  \nullv.
\end{equation}
\end{corollary}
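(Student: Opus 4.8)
The plan is to recognize Corollary \ref{cor:mulrul} as a specialization of Theorem \ref{thm:noptcond}, under hypotheses that allow every ``soft'' object appearing in the general multiplier rule $(\ref{in:multrulenS})$ to collapse into concrete elements. First I would check that the hypotheses of Theorem \ref{thm:noptcond} are indeed satisfied. Hypotheses (i), (ii), (iii) here match (i), (ii), (iii) there verbatim. Hypothesis (iv) here (the global condition $H(u)+\eta\Uball\subseteq C$ for a single $u\in\Uball$) immediately yields the localized condition (iv) of Theorem \ref{thm:noptcond}: since $\bar x\in\inte S$ by hypothesis (v) here, we have $\WIang{S}{x}=\R^n$ for all $x$ near $\bar x$, so the same $u$ works for every $x\in\ball{\bar x}{\delta}\cap S$. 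Hypothesis (v) of the theorem asks that $S$ be locally convex around $\bar x$ together with a qualification condition on the interiors; since $\bar x\in\inte S$ gives $\Tang{S}{\bar x}=\R^n$ (so $\inte\Tang{S}{\bar x}=\R^n$), local convexity is automatic near an interior point and the interior-intersection condition reduces to hypothesis (iii) here, namely $\bigcap_{i=1}^p\inte\Lambda_i^{-1}(C)\ne\varnothing$. Finally, hypothesis (vi) of Theorem \ref{thm:noptcond} coincides with (iii) here.

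Having verified all the hypotheses, I would invoke the conclusion $(\ref{in:multrulenS})$ of Theorem \ref{thm:noptcond}, which reads
$$
  -\FuD\varphi(\bar x)\subseteq\sum_{i=1}^p\Lambda_i^\top(\ndc{C})+\Ncone{S}{\bar x}.
$$
Now I would exploit the extra hypotheses to simplify both sides. Since $\varphi$ is differentiable at $\bar x$ (hypothesis (vi) here), we have $\FuD\varphi(\bar x)=\{\nabla\varphi(\bar x)\}$, as noted in the text following the definition of the Fr\'echet upper subdifferential; hence the left side is the singleton $\{-\nabla\varphi(\bar x)\}$. Since $\bar x\in\inte S$, the normal cone $\Ncone{S}{\bar x}$ reduces to $\{\nullv\}$ (equivalently, $\ndc{(\Tang{S}{\bar x})}=\ndc{\R^n}=\{\nullv\}$). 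Therefore the inclusion collapses to
$$
  -\nabla\varphi(\bar x)\in\sum_{i=1}^p\Lambda_i^\top(\ndc{C}).
$$

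The final step is purely a matter of unwinding the set-sum notation. Membership of $-\nabla\varphi(\bar x)$ in $\sum_{i=1}^p\Lambda_i^\top(\ndc{C})$ means precisely that there exist $y_1,\dots,y_p\in\ndc{C}$ with $-\nabla\varphi(\bar x)=\sum_{i=1}^p\Lambda_i^\top(y_i)$, which is exactly equation $(\ref{eq:multrul})$ together with the stated sign conditions $y_i\in\ndc{C}$. I do not anticipate any genuine obstacle here: the whole argument is a deduction from the already-proved Theorem \ref{thm:noptcond}, and the only care needed is to confirm that the interior-point assumption $\bar x\in\inte S$ correctly degenerates the weak feasible-direction cone, the local-convexity requirement, and the normal cone all at once. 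The mildly delicate point, worth stating explicitly rather than glossing over, is that hypothesis (iv) here is a global openness-type condition while Theorem \ref{thm:noptcond}(iv) is localized with the cone $\WIang{S}{x}$; the bridge is exactly that interior points of $S$ have full weak feasible-direction cone, so no loss occurs in the specialization.
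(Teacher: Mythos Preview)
Your proposal is correct and follows essentially the same route as the paper's proof: reduce to Theorem \ref{thm:noptcond} by observing that $\bar x\in\inte S$ makes $\WIang{S}{x}=\Tang{S}{\bar x}=\R^n$ (hence hypotheses (iv) and (v) of the theorem are met), and then simplify $(\ref{in:multrulenS})$ using $\Ncone{S}{\bar x}=\{\nullv\}$ and $\FuD\varphi(\bar x)=\{\nabla\varphi(\bar x)\}$. Your write-up is in fact more careful than the paper's in explicitly verifying each hypothesis of Theorem \ref{thm:noptcond}.
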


\begin{proof}
Under hypothesis (v), there exists $\delta>0$ such that
$\ball{\bar x}{\delta}\subseteq S$ and for every $x\in\ball{\bar x}{\delta}$
it is $\ball{x}{\delta}\subseteq S$ too. As a consequence, one has that
$\WIang{S}{\bar x}=\Tang{S}{\bar x}=\R^n$ and the current hypothesis (iv) ensures
what is requested in hypothesis (iv) in Theorem \ref{thm:noptcond}. Then,
the thesis follows immediately from Theorem \ref{thm:noptcond}, if taking
into account that, in the present case, it is $\Ncone{S}{\bar x}=\{\nullv\}$
and $\FuD\varphi(\bar x)=\{\nabla\varphi(\bar x)\}$.
\end{proof}

The necessary optimality condition formulated in Corollary \ref{cor:mulrul}
might remind a multiplier rule, with elements $y_i$, $i=1,\dots,p$, playing
the role of multipliers. Nevertheless, in comparison with classical
Lagrangian type optimality conditions, some substantial differences
evidently  emerge.
Notice indeed that each $y_i$ is a vector of $\R^m$, not a scalar. Besides,
all terms $y_i$ refer to the same constraint $F(x)\subseteq C$. Their number
is given by the number of linear mappings needed to represent the strict outer
prederivative of $F$ at $\bar x$. So, it depends on the tool utilized for
approximating $F$ near $\bar x$, it is not an intrinsic constant of the
constraint system (and hence of the problem). On the other hand, the conditions
$y_i\in\ndc{C}$, $i=1,\dots,p$, can be regarded as a vector counterpart
of a sign condition, which is typical of optimality conditions for problems
with side-constraints (inequality systems and their generalizations).

As a further comment referring both, conditions $(\ref{in:multrulenS})$ and
$(\ref{eq:multrul})$, let us point out the computational appeal that
these conditions display: such a nontrivial constraint system as $(\IGE)$
turns out to be treated, under proper assumptions, by means of linear
algebra tools. This holds a fortiori whenever $C$ is polyhedral.

\vskip1cm


\end{document}